\newcommand{\mc}{\mathbb{C}}
\newcommand{\mr}{\mathbb{R}}
\newcommand{\bi}{\begin{itemize}}
\newcommand{\ei}{\end{itemize}}
\newtheorem{thm}{Theorem}[section]
\newtheorem{lem}{Lemma}[section]
\newtheorem{prop}[thm]{Proposition}
\theoremstyle{definition}
\theoremstyle{Conjecture}
\theoremstyle{remark}
\theoremstyle{Example}
\newcommand{\be}{\begin{equation}}
\newcommand{\ee}{\end{equation}}
\newcommand{\bs}{\begin{split}}
\newcommand{\es}{\end{split}}
\newcommand{\bea}{\begin{eqnarray}}
\newcommand{\eea}{\end{eqnarray}}
\newcommand{\ben}{\begin{eqnarray*}}
\newcommand{\een}{\end{eqnarray*}}
\newcommand{\bet}{\begin{equation}

\begin{split}}
\newcommand{\eet}{\end{split}
\end{equation}}
\begin{document}
\title[]{Flat bundles over some compact complex manifolds}
\date{}
\author[F. Deng, J. E. Forn{\ae}ss]{Fusheng Deng, John Erik Forn{\ae}ss}
\address{F. Deng: School of Mathematical Sciences, University of Chinese Academy of Sciences, Beijing 100049, China}
\email{fshdeng@ucas.ac.cn}
\address{J. E. Forn{\ae}ss: Department for Mathematical Sciences, Norwegian University of Science
and Technology, Trondheim, Norway}
\email{john.fornass@math.ntnu.no}

\begin{abstract}
We construct examples of flat fiber bundles over the Hopf surface
such that the total spaces have no pseudoconvex neighborhood basis,
admit a complete K\"ahler metric, or are hyperconvex but have no nonconstant holomorphic functions.
For any compact Riemannian surface of positive genus,
we construct a flat $\mathbb P^1$ bundle over it and a Stein domain with real analytic bundary in it
whose closure does not have pseudoconvex neighborhood basis.
For a compact complex manifold with positive first Betti number,
we construct a flat disc bundle over it such that the total space is hyperconvex but admits no nonconstant holomorphic functions.
\end{abstract}
\maketitle

\section{Introduction}
The aim of the present note is to construct flat bundles over some compact complex manifolds
such that the total spaces can be used as examples for some problems in several complex variables.

It is known there exists a bounded pseudoconvex domain $D$ in $\mc^n$ ($n>1$) with smooth boundary
such that its closure has no pseudoconvex (or Stein) neighborhood basis \cite{Diederich-Fornaess77}.
But if a bounded pseudoconvex domain in $\mc^n$ has real analytic boundary,
its closure has a pseudoconvex neighborhood basis \cite{Diederich-Fornaess78}.
So it is natural to ask whether the closure of a bounded pseudoconvex or Stein domain with real analytic boundary in a complex manifold
has apseudoconvex neighborhood basis.
In this note, we will construct a domain $\Omega$ with real analytic Levi-flat boundary in the total space $B$ of a flat $\mathbb P^1$ bundle over a Hopf surface
such that its closure does not have a pseudoconvex neighborhood basis, with $\Omega$ realized as a flat disc bundle over the Hopf surface.
In this example, $B$ is not projective and $\Omega$ is not Stein.
For any compact Riemannian surface of positive genus,
we also construct a flat $\mathbb P^1$ bundle over it such that the total space is a projective manifold,
and a Stein domain with real analytic boundary in the total space whose closure does not have a pseudoconvex neighborhood basis.

The first domain $\Omega$ mentioned in the previous paragraph is a flat disc bundle over the Hopf surface.
Note that the Hopf surface is not K\"ahler.
It is natural to ask whether $\Omega$ admits a K\"ahler metric.
Using the Oka-Grauert principle, we will show that $\Omega$ is biholomorphic to
the product $A\times\mc^2\backslash 0$, where $A$ is an annulus,
and hence admits a complete K\"ahler metric.
We also construct a flat $\mc^*$-bundle over the Hopf surface such that the total
space has a complete K\"ahler metric.

Finally, for any compact complex manifold with positive first Betti number,
we construct a flat disc bundle over it such that the total space has a bounded exhaustive
real analytic plurisubharmonic function but does not have any nonconstant holomorphic function.
The method can be generalized to construct for any $n>1$ an $n$-dimensional complex manifold $X$
such that $X$ has a real analytic bounded plurisuhbarmonic exhaustion whose complex Hessian has $n-1$ positive eigenvalues everywhere
but $X$ has no nonconstant holomorphic functions.
Examples of hyperconvex domains without bounded holomorphic functions was considered in \cite{Adachi17}.

\vspace{.1in} {\em Acknowledgements} The first author is partially supported by NSFC grants.
The second author partially supported by an NFR grant.

\section{The general framework}%\label{sec:famework}
In the section, we set up a general framework for constructing flat fiber bundles,
and prove a result about the existence of complete K\"ahler metrics that will be used a few times later.

\subsection{Construction of flat bundles}\label{subsec:framework}
Let $\tilde X$ be a complex manifold.
Let $\Gamma\subset Aut(\tilde X)$ be a discrete subgroup in the automorphism group $Aut(\tilde X)$ of $\tilde X$
that acts freely on $\tilde X$. Then $X:=\tilde X/\Gamma$ is a complex manifold.
Let $F$ be another complex manifold.
Let $\sigma:\Gamma\rightarrow Aut(F)$ be a group morphism.
We can construct a flat fiber bundle $p:B\rightarrow X$ with fiber $F$ as follows, where flatness means that the transition functions are locally constant.
First let $\tilde B=\tilde X\times F$ be the trivial bundle over $\tilde X$ and $\tilde p:\tilde B\rightarrow \tilde X$ be the natural projection.
Then we can define an action of $\Gamma$ on $\tilde B$ as follows:
$$\gamma\cdot(z,v)=(\gamma z, \sigma(\gamma)v),\  (z,v)\in \tilde X\times F.$$
It is obvious that the action of $\Gamma$ on $\tilde B$ is freely and properly discontinuously.
The quotient $B:=\tilde B/\Gamma$ is a complex manifold and gives a flat fiber bundle over $X$ with fibers biholomorphic to $F$.
We have the following commutative diagram:
$$
\begin{xy}
(0,20)*+{\tilde B}="v1";(20,20)*+{B}="v2";
(0,0)*+{\tilde X}="v3";(20,0)*+{X}="v4";%
{\ar@{->}^{\tilde\pi} "v1"; "v2"};{\ar@{->}_{\tilde p} "v1";"v3"};
{\ar@{->}^{p}"v2"; "v4"};{\ar@{->}^{\pi} "v3"; "v4"};;
\end{xy},
$$
where $\tilde \pi$ is the quotient map, and $p$ is defined as $p(\tilde\pi(z,v))=\pi(z)$.

Let $v\in F$ and let $s_v:\tilde X\rightarrow \tilde B$ be the constant section given by $s_v(z)=(z,v)$ for all $z\in \tilde X$.
We consider the map $\tilde{\pi}\circ s_v:\tilde X\rightarrow B$ and let $X_v:=\tilde{\pi}\circ s_v(\tilde X)\subset B$.
Note also that there is an action of $\Gamma$ on $F$ induced by $\sigma:\Gamma\rightarrow Aut(F)$.
We have the following:

\begin{lem}\label{lem:loc constant section}
With the above notations, we have
\begin{itemize}
\item[(1)] If the action of $\Gamma$ on $F$ is free,
then the map $\tilde\pi\circ s_v:\tilde X\rightarrow B$ is injective;
\item[(2)] If the action of $\Gamma$ on $F$ is properly discontinuous,
then the map $\tilde\pi\circ s_v:\tilde X\rightarrow B$ is proper;
\item[(3)] If the action of $\Gamma$ on $F$ is free and properly discontinuous,
then the map $\tilde\pi\circ s_v:\tilde X\rightarrow B$ is a proper holomorphic embedding;
\item[(4)] If $v\in F$ is fixed by $\Gamma$, then $X_v$ is biholomorphic to $X$;
\item[(5)] For any $v,w\in F$, if $X_v\cap X_w\neq \emptyset$, we have $X_v=X_w$; and $X_v=X_w$ if and only if $w=\sigma(\gamma)v$ for some $\gamma\in \Gamma$.
\end{itemize}
\end{lem}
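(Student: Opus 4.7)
My approach is to unwind the definitions, making repeated use of the natural continuous map
\[
q:B\to F/\Gamma,\qquad \tilde\pi(z,v)\mapsto[v],
\]
which is well defined because $(\gamma z,\sigma(\gamma)v)$ and $(z,v)$ have the same class $[v]\in F/\Gamma$. A direct check gives $X_v=q^{-1}([v])$: the containment $\subseteq$ is clear, and if $q(\tilde\pi(z,w))=[v]$ then $w=\sigma(\gamma)v$ for some $\gamma\in\Gamma$, so $\tilde\pi(z,w)=\tilde\pi(\gamma^{-1}z,v)\in X_v$. Part (5) is then immediate, since $X_v\cap X_w\ne\emptyset$ iff $[v]=[w]$, iff $w\in\sigma(\Gamma)v$, and in that case both sets equal $q^{-1}([v])$. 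Part (1) is also short: if $\tilde\pi(z_1,v)=\tilde\pi(z_2,v)$ then $\gamma z_1=z_2$ and $\sigma(\gamma)v=v$ for some $\gamma$, and freeness of the $\Gamma$-action on $F$ forces $\gamma=e$, hence $z_1=z_2$.

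The substantive step is (2). Given a compact $K\subset B$, I would cover the compact set $p(K)\subset X$ by finitely many open sets $U_i$ over which $\pi$ admits a holomorphic section $s_i:U_i\to\tilde X$ with image $\tilde U_i$. Through $\tilde\pi$, each $p^{-1}(U_i)$ is trivialized as $\tilde U_i\times F$, and a point $\gamma z'$ with $z'\in\tilde U_i$ is sent by $\Phi_v:=\tilde\pi\circ s_v$ to the coordinates $(z',\sigma(\gamma^{-1})v)$ in this trivialization. If $\gamma z'\in\Phi_v^{-1}(K)$, then $\sigma(\gamma^{-1})v$ lies in the compact projection $L_i\subset F$ of $K\cap p^{-1}(U_i)$ onto the fiber factor. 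Proper discontinuity of the $\Gamma$-action on $F$, applied to the compact set $L_i\cup\{v\}$, restricts the admissible $\gamma$ to a finite set, and for each such $\gamma$ the corresponding slice of $\Phi_v^{-1}(K)$ is compact; hence $\Phi_v^{-1}(K)$ is a finite union of compact sets. Part (3) follows by combining (1) and (2): in the local trivializations above $\Phi_v$ has the form $z\mapsto(\pi(z),v)$, so it is an immersion, and an injective proper holomorphic immersion is a closed holomorphic embedding.

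For (4), if $v$ is fixed by $\Gamma$ then $\Phi_v(\gamma z)=\tilde\pi(\gamma z,\sigma(\gamma)v)=\tilde\pi(z,v)=\Phi_v(z)$ for every $\gamma$, so $\Phi_v$ descends to a holomorphic map $\bar\Phi_v:X\to X_v$. This $\bar\Phi_v$ is surjective by construction, injective (two preimages in $\tilde X$ must lie in a common $\Gamma$-orbit), and a local biholomorphism because locally $\Phi_v$ has the form $z\mapsto(\pi(z),v)$; hence it is a biholomorphism $X\cong X_v$. The main obstacle I anticipate is in part (2): writing down the trivializations cleanly and applying proper discontinuity to exactly the right compact subset of $F$ so that only finitely many translates $\gamma\tilde U_i$ contribute to the preimage.
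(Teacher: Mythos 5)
Your proof is correct and takes essentially the same route as the paper's: parts (1), (4), (5) are the same orbit computations (your map $q:B\to F/\Gamma$ merely packages (5) neatly), and part (2) rests on the same key point the paper uses --- applying proper discontinuity of the action of $\Gamma$ on $F$ to $\{v\}$ together with a compact set of fiber coordinates extracted from $K$ --- the paper via a sequence tending to infinity and a single cover $K\subset\tilde\pi(K_1\times K_2)$, you via local trivializations. The one loose end is the compactness of your $L_i$: since $K\cap p^{-1}(U_i)$ need not be compact, shrink to $V_i\Subset U_i$ still covering $p(K)$ and project $K\cap p^{-1}(\overline{V_i})$ instead; this is routine.
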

\begin{proof}
For simplicity, we denote $\tilde\pi(z,v)$ by $[z,v]$ for $(z,v)\in\tilde B$.
\begin{itemize}
\item[(1)] if $[z,v]=[z',v]$, there is some $\gamma\in \Gamma$ such that $z'=\gamma z$ and $v=\sigma(\gamma)v$.
We then have $\gamma=Id$ and hence $z=z'$ since the action of $\Gamma$ on $F$ is free.
So $\tilde\pi\circ s_v$ is injective.
\item[(2)] Recall that the action of $\Gamma$ on $F$ is called properly discontinuous if for any compact sets $K, L\subset F$,
the set $\{\gamma\in \Gamma; \gamma K\cap L\neq\emptyset\}$ is always finite.
we need to prove that if $z_n\rightarrow \infty$ in $\tilde X$,
then $[z_n, v]\rightarrow \infty$ in $B$.
If it is not the case, then we can find a sequence $\{z_n\}$ in $\tilde X$
such that $z_n\rightarrow\infty$ but $\{[z_n,v]\}$ lies in some compact set $K$ of $B.$
Let $K_1\subset\tilde X, K_2\subset F$ be compact sets such that $K\subset\tilde\pi(K_1\times K_2)$.
Then for each $z_n$, we can find $\gamma_n\in\Gamma$ such that $\gamma_n(z_n)\in K_1, \sigma(\gamma_n)(v)\in K_2$.
From the first inclusion, we see that $\{\gamma_n;n\geq 1\}$ is infinite;
but from the second inclusion, we see that $\{\gamma_n;n\geq 1\}$ is a finite set since the action of $\Gamma$ on $F$ is properly discontinuously.
Contradiction.
\item[(3)] this is a direct consequence of (1) and (2).
\item[(4)] this is obvious.
\item[(5)] assume $X_v\cap X_w\neq\emptyset$, then we have $[z_1,v]=[z_2,w]$ for some $z_1,z_2\in\tilde X$.
this implies $(z_2,w)=(\gamma z_1, \sigma(\gamma)v)$ for some $\gamma\in\Gamma$.
So for any $z\in\tilde X$, we have $[z,v]=[\gamma z, \sigma(\gamma)v]=[\gamma z, w]\in X_w$.
Hence we have $X_v\subset X_w$. We can prove $X_w\subset X_v$ in the same way.
The second statement in (5) is obvious.
\end{itemize}
\end{proof}

From (5) in the above lemma, we see that $B$ is foliated by $X_v$ for $v\in F$,
and the parameter space of the leaves is naturally identified to the quotient space $F/\Gamma$.

The above construction can be extended to some continuous group actions,
namely, replacing $\Gamma$ by some continuous subgroup of $Aut(\tilde X)$.
But we will not carry out the details of this case in this note.

\subsection{Existence of K\"ahler metrics}\label{subsec:existence Kahler metric}
We now prove a result about the existence of K\"ahler metrics that will be used several times later.

\begin{prop}\label{prop:submersion case}
Let $p:X\rightarrow T$ be a holomorphic submersion, where $X$ and $T$ are compact complex manifolds.
Assume that there is a holomorphic line bundle $L$ over $X$ such that
$L|_{X_t=p^{-1}(t)}$ is ample for all $t\in T$.
Then $X$ is K\"ahler if $T$ is K\"ahler ,
and $X$ is projective if $T$ is projective.
\end{prop}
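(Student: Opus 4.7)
The plan is to use the relative ampleness of $L$ to realize $X$ as a submanifold of a projective bundle $\mathbb{P}(E) \to T$, after which the conclusion follows from the classical fact that projective bundles over Kähler (respectively, projective) bases are Kähler (respectively, projective).

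First I would show that there exists $k_0$ such that for every $k \geq k_0$ and every $t \in T$ one has $H^i(X_t, L^k|_{X_t}) = 0$ for $i \geq 1$ and $L^k|_{X_t}$ very ample. For each individual $t$ both conditions hold for all sufficiently large $k$ by the ampleness of $L|_{X_t}$. The vanishing and very-ampleness are open conditions in $t$ (by upper semi-continuity of sheaf cohomology, together with the openness of base-point-freeness and the separation of points and tangent vectors), so the compactness of $T$ yields a single uniform $k_0$ that works for every fiber simultaneously.

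With $k_0$ in hand, Grauert's cohomology-and-base-change theorem implies that $E := p_*(L^{k_0})$ is a locally free $\mathcal{O}_T$-module, and the evaluation map $p^*E \to L^{k_0}$ is fiberwise surjective. This surjection produces a closed holomorphic embedding $\iota : X \hookrightarrow \mathbb{P}(E)$ over $T$ with $\iota^*\mathcal{O}_{\mathbb{P}(E)}(1) \cong L^{k_0}$. If $T$ is Kähler with Kähler form $\omega_T$, equipping $E$ with a smooth Hermitian metric induces a Hermitian metric on $\mathcal{O}_{\mathbb{P}(E)}(1)$ whose curvature $\omega_E$ is strictly positive along the fibers of $\pi : \mathbb{P}(E) \to T$. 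A standard compactness argument on $\mathbb{P}(E)$ then shows that $\omega_E + C\pi^*\omega_T$ is a Kähler form on $\mathbb{P}(E)$ for all sufficiently large $C$, and restriction to $X$ furnishes a Kähler form on $X$. If $T$ is additionally projective with ample line bundle $H$, then $\mathcal{O}_{\mathbb{P}(E)}(1) \otimes \pi^*H^m$ is ample on $\mathbb{P}(E)$ for $m$ large, so $\mathbb{P}(E)$ is projective, and $X$, as a closed complex submanifold, is projective as well.

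The principal obstacle is the uniform choice of $k_0$ in the first step: fiberwise ampleness provides $k$ pointwise in $t$, and one needs to upgrade this to a single $k_0$ working uniformly on the compact base. Once this is secured, the remaining steps assemble standard material on Grauert direct images, the tautological embedding into $\mathbb{P}(E)$, and the Kähler/ample structure on projectivized bundles.
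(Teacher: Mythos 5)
Your strategy (relative embedding $X\hookrightarrow\mathbb P(E)$ with $E=p_*(L^{k_0})$, then descend the K\"ahler/projective property from $\mathbb P(E)$) is viable, but the step you yourself flag as the principal obstacle does contain a genuine gap as argued. Semicontinuity and the openness of very ampleness give you, for each \emph{fixed} power $k$, an open set $V_k\subset T$ on which $L^k$ is fiberwise very ample with vanishing higher cohomology; since each $t$ lies in $V_k$ for \emph{some} $k$ depending on $t$, compactness only yields finitely many exponents $k_1,\dots,k_m$ covering $T$, not a single uniform $k_0$. The sets $V_k$ are not nested in $k$, and passing to a common multiple does not close the gap: a positive power of a very ample bundle is again very ample, but the vanishing $H^i(X_t,L^{k_0}|_{X_t})=0$ for $i\geq 1$ --- which is exactly what you need for Grauert's theorem to make $p_*(L^{k_0})$ locally free and compatible with base change --- is not inherited by powers. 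The standard repair is to first show that $L$ is \emph{relatively} ample over a neighborhood of each $t_0\in T$, and then invoke relative Serre vanishing on such a neighborhood (whose conclusion does hold for all sufficiently large $k$ simultaneously, so that taking the maximum over a finite cover is legitimate). In the analytic category this openness of relative ampleness is not a formal consequence of semicontinuity; it is precisely the content of the paper's Lemma \ref{lem:positive metric fiber}, which produces a Hermitian metric on $L$ over $p^{-1}(U)$ with curvature strictly positive along every fiber $X_t$, $t\in U$, via a topological trivialization and a continuity argument.

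Granting that lemma, the two proofs diverge: the paper finishes directly by patching the local metrics with a partition of unity and adding $N\,p^*\omega_T$ (respectively $N\,p^*\mathrm{Ric}(L_0,h_0)$ for an ample $L_0$ on $T$), so that compactness gives global positivity and Kodaira's theorem applies to $L+Np^{-1}L_0$ on $X$ itself --- no direct images, base change, or projective bundles are needed. Your route, once the uniform $k_0$ is secured, buys a concrete relative projective embedding of $X$ over $T$, which is more information than the statement requires, at the cost of importing Grauert's direct image and base-change machinery. If you want to keep your architecture, replace the "openness $+$ compactness" sentence by an appeal to the openness of relative ampleness (proved, e.g., as in Lemma \ref{lem:positive metric fiber}) followed by relative Serre vanishing.
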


We first proof a Lemma.

\begin{lem}\label{lem:positive metric fiber}
Let $X, T, L$ as in Proposition \ref{prop:submersion case}.
Then there is a Hermitian metric on $L$ such that $Ric(L,h)|_{X_t}$
is strictly positive for all $t\in T$,
where $Ric(L,h)$ is the Ricci curvature form of $L$ with respect
to $h$.
\end{lem}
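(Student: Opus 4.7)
The plan is to fix any smooth Hermitian metric $h_0$ on $L$ and look for the desired metric in the form $h = h_0 e^{-\Psi}$, where $\Psi$ is a smooth real-valued function on $X$. Then $Ric(L,h) = Ric(L,h_0) + i\partial\bar\partial \Psi$, and the problem reduces to producing a global $\Psi$ whose fiberwise complex Hessian dominates $-Ric(L,h_0)|_{X_t}$ strictly, for every $t \in T$.

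For each $t_0 \in T$, ampleness of $L|_{X_{t_0}}$ over the compact manifold $X_{t_0}$ gives, by Kodaira's theorem, a smooth function $\phi_{t_0}$ on $X_{t_0}$ with $Ric(L,h_0)|_{X_{t_0}} + i\partial\bar\partial \phi_{t_0}$ strictly positive. Since $p$ is a submersion between smooth manifolds, the fiber $X_{t_0}$ sits inside a tubular neighborhood of the form $p^{-1}(U_{t_0})$ on which $\phi_{t_0}$ extends to a smooth function $\tilde\phi_{t_0}$. Because strict positivity of a continuous family of $(1,1)$-forms on a compact fiber is an open condition on the parameter, after possibly shrinking $U_{t_0}$ the form $Ric(L,h_0) + i\partial\bar\partial \tilde\phi_{t_0}$ remains strictly positive on every $X_t$ with $t \in U_{t_0}$.

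Compactness of $T$ then yields a finite subcover $\{U_\alpha\}_{\alpha=1}^N$ with associated smooth functions $\tilde\phi_\alpha$ on $p^{-1}(U_\alpha)$ satisfying the fiberwise positivity described above. Choose a smooth partition of unity $\{\rho_\alpha\}$ on $T$ subordinate to this cover and define
\[
\Psi \;=\; \sum_\alpha (\rho_\alpha \circ p)\,\tilde\phi_\alpha,
\]
with each summand extended by zero outside $p^{-1}(U_\alpha)$. The essential point is that $\rho_\alpha \circ p$ is constant along the fibers of $p$, so $\partial(\rho_\alpha \circ p)|_{X_t} = \bar\partial(\rho_\alpha \circ p)|_{X_t} = 0$ and $i\partial\bar\partial(\rho_\alpha \circ p)|_{X_t} = 0$. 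The Leibniz expansion of $i\partial\bar\partial\bigl((\rho_\alpha\circ p)\tilde\phi_\alpha\bigr)$ therefore collapses upon restriction to $X_t$, giving
\[
i\partial\bar\partial \Psi|_{X_t} \;=\; \sum_\alpha \rho_\alpha(t)\,i\partial\bar\partial \tilde\phi_\alpha|_{X_t}.
\]
Since $\sum_\alpha \rho_\alpha(t) = 1$, the form $(Ric(L,h_0) + i\partial\bar\partial \Psi)|_{X_t}$ is a convex combination of the strictly positive forms $(Ric(L,h_0) + i\partial\bar\partial \tilde\phi_\alpha)|_{X_t}$ indexed by those $\alpha$ with $\rho_\alpha(t) > 0$, hence strictly positive. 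Taking $h = h_0 e^{-\Psi}$ produces the required metric.

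The only conceptual obstacle is the smooth variation of the Kodaira fiber metric in the parameter $t$, and the trick is precisely to pull back a partition of unity from $T$: it costs nothing on fibers because it is constant along them, so the patched global function $\Psi$ inherits fiberwise positivity without any cross-term corrections.
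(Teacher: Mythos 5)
Your proof is correct, and it takes a genuinely different route from the paper's. The paper passes to the dual bundle $L^*$: after arranging very ampleness on fibers it realizes $L^*|_{X_{t_0}}$ as the pullback of $O(-1)$ under a map to $\mathbb{P}^\infty$, invokes a classifying-space/homotopy argument over a contractible $U$ to identify the restricted bundle with a product pullback, transports the canonical negatively curved metric on $O(-1)$ to get local metrics $h^*_\alpha$ that are fiberwise negatively curved, and then glues by summing the metrics themselves against a partition of unity on $T$; the point making that gluing work is that a convex combination of metrics $e^{-\psi_\alpha}$ has weight $-\log\sum\rho_\alpha e^{-\psi_\alpha}$, and $\log$ of a sum of exponentials of (fiberwise) plurisubharmonic functions is again plurisubharmonic --- which is exactly why the paper must dualize before gluing. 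You instead fix one global background metric $h_0$ and reduce the whole problem to patching scalar potentials: locally, ampleness plus the fact that any two metrics on a line bundle differ by a conformal factor gives a fiber potential $\phi_{t_0}$, which you extend via Ehresmann's local triviality (properness of $p$ matters here and holds since $X$ is compact). Your gluing is then an honest linear convex combination of $(1,1)$-forms on each fiber, because $\rho_\alpha\circ p$ restricted to $X_t$ is constant, so $\iota_t^*\partial(\rho_\alpha\circ p)=\iota_t^*\bar\partial(\rho_\alpha\circ p)=0$ and all Leibniz cross-terms die under $\iota_t^*$. This buys you a shorter argument that avoids the classifying-space machinery and the convexity-of-$\log\sum e^{u}$ trick entirely; the trade-off is that you must quote the analytic characterization of ampleness (existence of a positively curved metric on each compact fiber), but that is essentially the same input the paper uses when it writes $L|_{X_{t_0}}=g^{-1}O(1)$. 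Both arguments share the same skeleton: local construction near each fiber, openness of fiberwise strict positivity, compactness of $T$, and a partition of unity pulled back from the base.
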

\begin{proof}
Replacing $L$ by some powers of it,
we may assume for simplicity that $L$ is very ample on each fiber $X_t$.

Instead of considering $L$ itself, we will consider the dual bundle $L^*$ of $L$.
Fix an arbitrary point $t_0\in T$, there is a neighborhood $U$ of $t_0$ in $T$
such that there is a fiber preserving diffeomorphism
$$\phi:U\times X_{t_0}\rightarrow p^{-1}(U)$$
such that $\phi$ is the identity on $X_{t_0}$.

Let $\tilde L=\phi^{-1}L|_{p^{-1}(U)}$, then $\tilde L$ is a smooth complex line bundle
over $U\times X_{t_0}$ and $\tilde L|_{X_{t_0}}=L|_{X_{t_0}}$.
Since $L|_{X_{t_0}}$ is very ample, there is a holomorphic map $g:X_{t_0}\rightarrow \mathbb P^\infty$
such that $L|_{X_{t_0}}=g^{-1}O(1)$, where $O(1)$ is the dual of the tautological line bundle $O(-1)$ over $\mathbb P^\infty$.
So $\tilde L^*|_{X_{t_0}}=g^{-1}O(-1)$.

We know assume that $U$ is diffeomorphic to a ball.
It is known from topology that $\tilde L^*$ is isomorphic as topological complex line bundles
to $G^{-1}O(-1)$ for some smooth map $G:U\times X_{t_0}\rightarrow \mathbb P^\infty$.
The restriction $G|_{X_{t_0}}$ and $g$ are homotopic.
So $G$ is homotopic to the map $g\circ p_1:U\times X_{t_0}\rightarrow \mathbb P^\infty$,
where $p_1:U\times X_{t_0}\rightarrow X_{t_0}$ is the natural projection.
It follows that $\tilde L^*$ is isomorphic to $p_1^{-1}\tilde L^*|_{X_{t_0}}=p_1^{-1}L^*|_{X_{t_0}}$ as smooth complex line bundles.

There is a canonical Hermitian metric $h_0$ on $O(-1)$ whose curvature form is strictly negative.
This induces a smooth Hermitian metric say $h^*_U$ on $L^*|_{p^{-1}(U)}$.
The curvature form of $(L^*|_{p^{-1}(U)}, h^*_U)$ is strictly negative on $X_{t_0}$.
By continuity and by contracting $U$ is necessary, we see that
the curvature form of $(L^*|_{p^{-1}(U)}, h^*_U)$ is strictly negative on $X_t$ for all $t\in U$.

So there is a finite open cover $\{U_\alpha\}$ of $T$ and Hermitian metrics $h^*_\alpha$'s
on $L^*|_{U_\alpha}$ for each $\alpha$ such that $Ric(L^*|_{U_\alpha}, h^*_\alpha)$ is strictly
negative on $X_t$ for all $t\in U_\alpha$.
Let $\{\rho_\alpha\}$ be a partition of unity of $T$ with respect to the open cover $\{U_\alpha\}$,
then $h^*=\sum_{\alpha}h^*_\alpha$ is a Hermitian metric on $L^*$ such that
$Ric(L^*,h^*)$ is strictly negative along $X_t$ for all $t\in T$.
Let $h$ be the metric on $L$ dual to $h^*$,
then $Ric(L,h)$ is strictly positive along $X_t$ for all $t\in T$.
\end{proof}

We now give the proof of Proposition \ref{prop:submersion case}.

\begin{proof}
We give the proof that $X$ is projective if $T$ is projective.
The proof of the first statement is similar.

By Lemma \ref{lem:positive metric fiber}, there is a Hermitian metric $h$ on $L$
such that the curvature form $Ric(L,h)$ is positive on $X_t=X_{p^{-1}(t)}$ for all $t\in T$.

Let $L_0$ be a positive line bundle over $T$ with a Hermitian metric $h_0$ such that $Ric(L_0,h_0)$
is positive. Then $Ric(L,h)+Np^*Ric(L_0,h_0)$ is positive for $N>>1$.
Let $\tilde L_0=p^{-1}L_0$ be the pull back of $L_0$, which is a line bundle over $X$.
Then $h+Np^*h_0$ is a Hermitian metric on $L+Np^{-1}L_0$ whose curvature form is $Ric(L,h)+Np^*Ric(L_0,h_0)$.
It follows that $L+Np^{-1}L_0$ is a positive line bundle over $X$.
By Kodaria's embedding theorem, $X$ is a projective manifold.
\end{proof}

Let $X$ be a compact complex manifold.
Let $p:E\rightarrow X$ be a holomorphic vector bundle of rank $r$ over $X$.
Let $\{U_\alpha\}_{\alpha\in\Lambda}$ be an open cover of $X$ such that $E|_{U_\alpha}$ is trivial for all $\alpha$.
Let $g_{\alpha\beta}:U_\alpha\cap U_\beta\rightarrow GL(r,\mathbb C)\subset Aut(\mc^r)$ be the transition functions of $E$.
Then $E$ can be constructed by gluing all $U_\alpha\times\mc^r$ via the transition functions $g_{\alpha\beta}$.
We now view $\mc^r$ as a subset of the projective space $\mathbb P^r$.
Then we have a natural inclusion from $GL(r,\mc)$ to $Aut(\mathbb P^r)=PGL(r+1,\mc)$.
So the transition function $g_{\alpha\beta}$ of $E$ induce a fiber bundle $E_\infty$ over $X$ with fibers biholomorphic to $\mathbb P^r$.
We have a natural inclusion $E\subset E_\infty$.

Let $D=E_\infty\backslash E$. Then $D$ is a divisor of $E_\infty$.
Let $L$ be the line bundle over $E_\infty$ associated to the divisor $D$.
Then the restriction of $L$ on each fiber of $E_\infty\rightarrow X$ is positive.
By Proposition \ref{prop:submersion case}, we have the following

\begin{lem}\label{lem:general existece Kahler metric}
If $X$ is a K\"ahler manifold, then $E_\infty$ is a K\"ahler manifold;
if $X$ is a projective manifold, then $E_\infty$ is a projective manifold.
\end{lem}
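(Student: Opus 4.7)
The plan is to invoke Proposition \ref{prop:submersion case} with $T=X$ and with $L$ as constructed, so all that needs to be checked is that the setup matches the hypotheses of that proposition. First I would note that $E_\infty$ is a compact complex manifold: $X$ is compact and each fiber $\mathbb{P}^r$ is compact, and $E_\infty$ is built by gluing the compact pieces $U_\alpha \times \mathbb{P}^r$ via biholomorphisms, so the natural projection $p_\infty: E_\infty \to X$ is a proper holomorphic submersion with compact total space.

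The substantive point is to identify $L$ restricted to a fiber. I would argue that $D$ is fiberwise a hyperplane as follows. In each local trivialization $U_\alpha \times \mathbb{P}^r$ of $E_\infty$, the subset $E \subset E_\infty$ is $U_\alpha \times \mathbb{C}^r$, so $D = E_\infty \setminus E$ is locally $U_\alpha \times H$, where $H = \mathbb{P}^r \setminus \mathbb{C}^r \cong \mathbb{P}^{r-1}$ is the hyperplane at infinity. These local descriptions patch into a global divisor because the transition functions of $E_\infty$ are the images of $g_{\alpha\beta} \in GL(r,\mathbb{C})$ under the inclusion $GL(r,\mathbb{C}) \hookrightarrow PGL(r+1,\mathbb{C})$, and such elements preserve $\mathbb{C}^r \subset \mathbb{P}^r$ and hence preserve $H$. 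Consequently, for every $x \in X$, the intersection $D \cap p_\infty^{-1}(x)$ is a hyperplane in the fiber $\mathbb{P}^r$, so $L|_{p_\infty^{-1}(x)} \cong \mathcal{O}_{\mathbb{P}^r}(1)$, which is very ample.

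With these two observations, the hypotheses of Proposition \ref{prop:submersion case} are satisfied, and the conclusion follows at once: if $X$ is K\"ahler then $E_\infty$ is K\"ahler, and if $X$ is projective then $E_\infty$ is projective.

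There is no real obstacle in this argument; the only delicate point is the global well-definedness of $D$ as a divisor, which is precisely the reason one must restrict to transition functions in $GL(r,\mathbb{C})$ rather than general elements of $Aut(\mathbb{C}^r)$. Everything else is a direct application of the preceding proposition.
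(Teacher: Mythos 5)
Your proposal is correct and follows the same route as the paper: the paper's argument is exactly to observe that $D=E_\infty\setminus E$ is a divisor whose associated line bundle $L$ is positive on each fiber, and then to apply Proposition \ref{prop:submersion case}. You have merely filled in the details the paper leaves implicit (compactness of $E_\infty$, the fiberwise identification of $L$ with $\mathcal{O}_{\mathbb P^r}(1)$, and the global well-definedness of $D$ via the $GL(r,\mathbb C)\hookrightarrow PGL(r+1,\mathbb C)$ transition functions), all of which are accurate.
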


%
%\begin{rem}\label{lem:positive metric fiber}
%In the proof of Proposition \ref{lem:positive metric fiber},
%the reason for considering  $L^*$ instead of $L$ itself is that the sum of two Hermitian metrics
%with positive curvature  on a line bundle do not have positive curvature in general,
%while the sum of two Hermitian metrics with negative curvature still have negative curvature.
%This fact follows from a simple result in several complex variables.
%Assume that $\phi_1$ and $\phi_2$ are plurisubharmonic functions.
%If $\phi$ is given by
%$$e^\phi=e^{\phi_1}+e^{\phi_2},$$
%then $\phi$ is a plurisubharmonic function;
%but if $\phi$ is given by
% $$e^{-\phi}=e^{-\phi_1}+e^{-\phi_2},$$
%$\phi$ is not plurisubharmonic in general.
%The positivity of the averaging of positively curved Hermitian metrics on a line bundle
%is the central study in the theory of minimum principle for plurisubharmonic functions,
%which is originated by Kiselman in \cite{Kiselman78} and was further studied by other authors
%(see e.g. \cite{Chafi83}\cite{Kiselman78}\cite{Zhou98}\cite{Zhou02}\cite{Berndtsson98}\cite{Berndtsson06}\cite{Deng-Zhou-Zhang14}\cite{Deng-Zhou-Zhang17}).
%\end{rem}

Another result that will be used repeatedly is the following

\begin{lem}\label{lem:Kahler metric complement}
Let $X$ be a compact K\"ahler manifold or a Stein manifold and $A\subset X$ be a closed analytic subset.
Then $X\backslash A$ admits a complete K\"ahler metric.
\end{lem}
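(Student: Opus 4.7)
My plan is to treat the two cases separately, since the Stein case and the compact K\"ahler case rest on different underlying facts.

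For the Stein case, I would first invoke the classical theorem that the complement of a closed analytic subset in a Stein manifold is again Stein. Once this reduction is made, the statement becomes: every Stein manifold $Y$ admits a complete K\"ahler metric. This is the classical theorem of Grauert. The construction is to take a strictly plurisubharmonic exhaustion $\varphi:Y\to\mathbb R_{\geq 0}$ and then pick a convex, sufficiently rapidly increasing function $\chi$ so that $\omega=i\partial\bar\partial\chi(\varphi)$ is both positive definite and complete; completeness is ensured by the Cauchy--Schwarz estimate that gives a lower bound for $\omega$ in terms of $d\chi(\varphi)$, forcing curves going to infinity to have infinite length.

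For the compact K\"ahler case, let $\omega_0$ be a K\"ahler form on $X$. The goal is to add to $\omega_0$ a correction that blows up near $A$ in such a way that the length of any curve approaching $A$ diverges, without destroying positivity. Cover a neighborhood of $A$ by finitely many open sets $U_j$ on each of which $A\cap U_j$ is cut out by holomorphic functions $f_{j,1},\ldots,f_{j,k_j}$, and set
\[
\psi_j=-\log\bigl(-\log(|f_{j,1}|^2+\cdots+|f_{j,k_j}|^2)\bigr)
\]
on $U_j$ (after shrinking so that the inner log is negative). Each $\psi_j$ is bounded and plurisubharmonic, and a direct computation shows that $i\partial\bar\partial\psi_j$ dominates a form whose length integral $\int dr/(r|\log r|)$ along a radial curve to $A$ diverges. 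Patching the $\psi_j$ together with a partition of unity supported in a neighborhood of $A$, and adding a large multiple of $\omega_0$ to absorb the negative contributions coming from derivatives of the partition of unity, one obtains a smooth K\"ahler form $\omega$ on $X\setminus A$ whose distance function is complete.

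The main obstacle is the compact K\"ahler case, specifically the combination of two issues: (i) $A$ may be singular and of varying codimension, so the local defining ideals are not principal in general and one must work with $\log$ of a sum of squares; (ii) after patching via a partition of unity the function $\sum_j\rho_j\psi_j$ is no longer plurisubharmonic, so one has to control the error terms coming from $\partial\rho_j$ and $\bar\partial\rho_j$ by a suitable large multiple of the fixed background K\"ahler form. Once that bookkeeping is done, the divergence estimate $\int dr/(r|\log r|)=\infty$ near each component of $A$ yields completeness, and positivity is inherited from $\omega_0$ away from $A$ and from the strictly positive $i\partial\bar\partial\psi_j$ near $A$.
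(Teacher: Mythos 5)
The paper does not actually prove this lemma; it simply cites Demailly's 1982 Annales ENS paper, where the result is established by essentially the construction you sketch in your compact K\"ahler case (the potential $-\log(-\log\sum_k|f_{j,k}|^2)$ built from local generators of the ideal sheaf of $A$, glued and added to a background metric). So your second half is on the right track and matches the standard proof.

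Your Stein case, however, contains a genuine error. The ``classical theorem'' you invoke --- that the complement of a closed analytic subset of a Stein manifold is again Stein --- is false unless $A$ is a hypersurface. The standard counterexample is $\mathbb{C}^2\setminus\{0\}$: by Hartogs every holomorphic function extends across the origin, so this complement is not holomorphically convex, hence not Stein. Worse, this is exactly the situation in which the paper applies the lemma: $\Omega\cong A\times\mathbb{C}^2_*=(A\times\mathbb{C}^2)\setminus(A\times\{0\})$, the complement of a codimension-two analytic subset, and $A\times\mathbb{C}^2_*$ is not Stein (it contains $\{\mathrm{pt}\}\times\mathbb{C}^2_*$ as a closed submanifold, and closed submanifolds of Stein manifolds are Stein). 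So your reduction collapses precisely where the lemma is needed. The fix is to drop the reduction entirely: in the Stein case take Grauert's complete K\"ahler metric on all of $X$ as the background form $\omega_0$ and run the same local correction near $A$ as in the compact case, with the finite cover replaced by a locally finite one.

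One further point in the compact case deserves attention: your claim that the gluing errors ``coming from derivatives of the partition of unity'' can be absorbed by a large multiple of $\omega_0$ is not automatic, because the cross terms $\partial\rho_j\wedge\bar\partial\psi_j$ involve $\bar\partial\psi_j$, which blows up near $A$ and is not bounded with respect to $\omega_0$. The standard way out (and the real content of Demailly's argument) is that on overlaps the two sets of generators cut out the same ideal sheaf, so $u_j-u_k$ is bounded where $u_j=\log\sum_l|f_{j,l}|^2$, and hence $\psi_j-\psi_k=-\log(-u_j)+\log(-u_k)\to 0$ near $A$ with controlled derivatives; only these differences are hit by $\partial\rho_j$, and those errors genuinely are absorbable. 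Also, your $\psi_j$ is bounded above but tends to $-\infty$ along $A$, not ``bounded''; boundedness above is what the construction actually uses.
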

For the proof of Lemma \ref{lem:Kahler metric complement}, see \cite{Demailly}.

\section{Flat bundles over the Hopf surface}\label{sec:flat bundle hopf}
\subsection{Disc bundle over the Hopf surface I}\label{subsec:disc bundle hopf:p.s.c nbd basis}
We denote $\mc^2\setminus \{0\}$ by $\mc^2_*$.
Let $\gamma_n\in Aut(\mc^2_*)$ be the map given by $z\rightarrow 2^n z$,
and let $\Gamma=\{\gamma_n; n\in\mathbb Z\}$.
Then $\Gamma$ acts freely and discontinuously on $\mc^2_*$.
The quotient space $H=\mc^2_*/\Gamma$ is called the Hopf surface,
which is a compact complex manifold of dimension 2.
Since $\pi_1(H)\approx \Gamma\approx \mathbb Z$ is commutative,
the first homology group of $H$ is $H_1(H,\mathbb Z)=\pi_1(H)/[\pi_1(H),\pi_1(H)]=\pi_1(H)\approx \mathbb Z$.
So by Hodge theory for compact K\"{a}hler manifolds,
we know that $H$ does not admit any K\"{a}hler metric.

Let $\phi\in Aut(\Delta)$ be an automorphism of the unit disc $\Delta$ given by $\phi(v)=\frac{v+1/2}{1+v/2}$,
and let $\Gamma'=\{\phi^n; n\in\mathbb Z\}$.
Then  $\Gamma'$ also acts freely and properly discontinuously on $\Delta$.
The map $\sigma:\Gamma\rightarrow\Gamma'$ given by $\gamma_n\mapsto \phi^n$ is obviously a group isomorphism.
Since elements in $\Gamma'$ are fractional transformations on the projective line $\mathbb P^1$,
we can also view $\Gamma'$ as a subgroup of $Aut(\mathbb P^1)$.

Now we apply the construction in \S \ref{subsec:framework} to our special case
by setting $\tilde X=\mc^2_*$, $X=H$, and $F=\mathbb P^1$.
Let $\tilde B=\mc^2_*\times \mathbb P^1$ be the trivial $\mathbb P^1$ bundle over $\mc^2_*$.
Then the action of $\Gamma$ on $\mc^2_*$ lifts to an action on $\tilde B$ as follows:
$$\gamma_n\cdot(z,v)=(2^nz,\phi^n(v)),\ (z,v)\in\mc^2_*\times\mathbb P^1.$$
Let $B=\tilde B/\Gamma$ and let $\tilde\pi:\tilde B\rightarrow B$ be the quotient map.
For simplicity, we denote $\tilde\pi(z,v)$ by $[z,v]$ for $(z,v)\in\tilde B$.
Then the map $p:B\rightarrow H$ given by $[z,v]\mapsto \pi(z)$ realize $B$
as a flat $\mathbb P^1$ bundle over $H$,
where $\pi:\mc^2_*\rightarrow H$ is the quotient map.
In summery, we have the following commutative diagram:

$$
\begin{xy}
(0,20)*+{\tilde B}="v1";(20,20)*+{B}="v2";
(0,0)*+{\mc^2_*}="v3";(20,0)*+{H}="v4";%
{\ar@{->}^{\tilde\pi} "v1"; "v2"};{\ar@{->}_{\tilde p} "v1";"v3"};
{\ar@{->}^{p}"v2"; "v4"};{\ar@{->}^{\pi} "v3"; "v4"};;
\end{xy}.
$$

We decompose $\mathbb P^1$ into three $\Gamma'$-invariant pieces $\Delta, \Delta', S^1$,
where $S^1$ is the unit circle, and $\Delta'$ is the complement of the closed unit disc.
It is clear that $\Gamma'$ acts on $\Delta$ and $\Delta'$ freely and properly discontinuously.
The action of $\Gamma'$ on $S^1\backslash\{\pm 1\}$ is also freely and properly discontinuously.
Note that $\pm 1$ are attracting fixed points of $\Gamma'$.
More precisely, we have $\phi(\pm 1)=\pm 1$, and for any $v\in\mathbb P^1\backslash\{\pm 1\}$,
$\lim_{n\rightarrow \pm \infty}\phi^n(v)=\pm 1$.

Let $\tilde\Omega=\mc^2_*\times\Delta, \tilde\Omega'=\mc^2_*\times\Delta', \tilde M=\mc^2_*\times S^1$.
Then $\tilde\Omega, \tilde\Omega'$ are $\Gamma$-invariant open subsets of $\tilde B$,
and $\tilde M$ is a $\Gamma$-invariant Levi-flat hypersurface of $\tilde B$.
Let $\Omega=\tilde\Omega/\Gamma, \Omega'=\tilde\Omega'/\Gamma, M=\tilde M/\Gamma$,
which are subsets of $B=\tilde B/\Gamma$.
Both $\Omega$ and $\Omega'$ are flat disc bundle over $H$, and $M$
is a flat circle bundle over $H$.

For $v\in\mathbb P^1$, as in the previous section,
let $s_v:\mc^2_*\rightarrow \tilde B$ be the constant section given by $s_v(z)=(z,v)$,
and denote by $X_v\subset B$ the image of $\tilde\pi\circ\tilde s_v$.
It is clear that $X_v\subset \Omega, \Omega', \Omega_0$ if $v\in \Delta, \Delta', S^1$ respectively.
By Lemma \ref{lem:loc constant section}, for $v\in \Delta,\ (\Delta')$,
the map $\tilde\pi\circ s_v:\mc^2_*\rightarrow\Omega\ (\tilde\pi\circ s_v:\mc^2_*\rightarrow\Omega')$
is a proper holomorphic embedding.
So we have the following

\begin{thm}[\cite{Diederich-Fornaess82}]
$\Omega$ is a pseudoconvex domain in $B$.
But there does not exist an increasing sequence of relatively compact pseudoconvex domains $\Omega_n$
in $\Omega$ such that $\cup\Omega_n=\Omega$.
\end{thm}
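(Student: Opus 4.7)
The plan is to establish the two assertions separately.

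For pseudoconvexity of $\Omega$ in $B$, my approach is to show $\Omega$ is locally Stein. Because $p:B\to H$ is a locally trivial $\mathbb P^1$-bundle, each point of $H$ admits a Stein neighborhood $U$ (for instance, a small coordinate ball in a trivializing chart of the Hopf surface) together with a bundle trivialization $p^{-1}(U)\cong U\times\mathbb P^1$ under which $\Omega\cap p^{-1}(U)$ corresponds to $U\times\Delta$. The latter is Stein as a product of Stein manifolds, so every point of $B$ has a neighborhood on which $\Omega$ is cut out by a Stein open, and therefore $\Omega$ is locally Stein, hence pseudoconvex in $B$.

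For the non-exhaustion statement I would argue by contradiction. Suppose $\{\Omega_n\}$ is an increasing sequence of relatively compact pseudoconvex open subsets of $\Omega$ with $\bigcup_n\Omega_n=\Omega$. Fix any $v\in\Delta$. Since $\Gamma'=\{\phi^n\}$ acts freely and properly discontinuously on $\Delta$, Lemma \ref{lem:loc constant section}(3) yields a proper holomorphic embedding $\tilde\pi\circ s_v:\mc^2_*\to B$ whose image $X_v$ is a closed complex submanifold of $\Omega$ biholomorphic to $\mc^2\setminus\{0\}$. The unit sphere $S=\{|z|=1\}\subset \mc^2\setminus\{0\}\cong X_v$ is compact in $X_v$ and hence in $\Omega$, so $S\subset\Omega_{n_0}$ for some $n_0$. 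Set $D:=\Omega_{n_0}\cap X_v$; because $X_v$ is closed in $\Omega$ and $\Omega_{n_0}\subset\subset\Omega$, the set $D$ is open and relatively compact in $X_v$, and the restriction to $X_v$ of a continuous plurisubharmonic exhaustion of $\Omega_{n_0}$ is a continuous plurisubharmonic exhaustion of $D$. Under the biholomorphism $X_v\cong\mc^2\setminus\{0\}\subset\mc^2$, the open set $D$ becomes a pseudoconvex open subset of $\mc^2$, and therefore a domain of holomorphy by the Oka solution of the Levi problem.

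The contradiction now comes from Hartogs extension. Because $S\subset D$ and $D$ is open, $D$ contains a spherical shell $\{1-\varepsilon<|z|<1+\varepsilon\}$ for some $\varepsilon>0$, and Hartogs' theorem in two complex variables extends every holomorphic function on this shell to the ball $\{|z|<1+\varepsilon\}$. Hence every holomorphic function on $D$ extends to $D\cup\{|z|<1+\varepsilon\}$; since $0\in\{|z|<1+\varepsilon\}$ while $0\notin D\subset\mc^2\setminus\{0\}$, this enlargement is strict, which contradicts $D$ being a domain of holomorphy. The main obstacle I expect is the descent step: one must verify carefully that $D$ is genuinely relatively compact in $X_v$ and that a plurisubharmonic exhaustion of $\Omega_{n_0}$ restricts to one on $D$, both of which hinge on $X_v$ being closed in $\Omega$; once this is in place, the puncture of $\mc^2\setminus\{0\}$ at $0$ delivers the contradiction via Hartogs.
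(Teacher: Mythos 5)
Your proof is correct and follows essentially the same route as the paper: pseudoconvexity via the local product structure $U\times\Delta$ (the paper instead observes that the boundary $M$ is a compact real-analytic Levi-flat hypersurface, which amounts to the same thing), and the contradiction by restricting the putative exhaustion to a closed leaf $X_v\cong\mc^2_*$ and invoking Hartogs extension across the unit sphere. The only difference is that you spell out the Hartogs step and the relative-compactness of $\Omega_{n_0}\cap X_v$ in detail, which the paper leaves implicit.
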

\begin{proof}
The boundary of $\Omega$ in $B$ is $M$, which is a real analytic compact Levi-Flat hypersurface in $B$.
So $\Omega$ is a pseudoconvex domain in $B$.
If there is an increasing sequence of relatively compact pseudoconvex domains $\Omega_n$ such that $\cup\Omega_n=\Omega,$
then $\Omega_n\cap X_0$ is an increasing sequence of relatively compact pseudoconvex domains in $X_0$ such that $\cup(\Omega_n\cap X_0)=\Omega$.
Note that since $X_0$ is biholomorphic to $\mc^2_*$,
this is impossible by Hartogs extension theorem.
\end{proof}

For $v=\pm 1$, it is clear that $X_v\subset M$ are biholomorphic to the Hopf surface $H$.
For $v\in\mathbb P^1\backslash\{\pm 1\}$, by the property of the action of $\Gamma'$ on $\mathbb P^1$,
we see that $\tilde\pi\circ s_v(z)\rightarrow X_1$ as $z\rightarrow\infty$,
and $\tilde\pi\circ s_v(z)\rightarrow X_{-1}$ as $z\rightarrow 0$.

\begin{thm}
The closure $\overline\Omega=\Omega\cup M$ of $\Omega$ in $B$ has no pseudoconvex neighborhood basis.
More precisely, there does not exist a decreasing sequence of pseudoconvex domains $\Omega_n$ in $B$
such that $\cap\Omega_n=\overline\Omega$.
\end{thm}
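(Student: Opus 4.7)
My plan is to argue by contradiction: assume such a decreasing sequence $\{\Omega_n\}$ exists and fix any $n$. Since $\Omega_n\subset B$ is pseudoconvex it carries a continuous plurisubharmonic exhaustion, which by Richberg's smoothing theorem can be replaced by a smooth, strictly plurisubharmonic exhaustion $u_n\colon\Omega_n\to\mr$.

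The idea is to restrict $u_n$ to a leaf $X_v$ with $v\in\Delta$. As discussed just before the theorem, each such leaf is the image of a proper holomorphic embedding $\mc^2_*\hookrightarrow\Omega$, so $X_v\cong\mc^2_*$ and $X_v\subset\Omega\subset\Omega_n$. Crucially, as recorded in the paragraph preceding the theorem, $\tilde\pi\circ s_v(z)$ tends to $X_1$ as $z\to\infty$ and to $X_{-1}$ as $z\to 0$, so
\[
\overline{X_v}^{B} \;=\; X_v\cup X_1\cup X_{-1}.
\]
Since $X_{\pm 1}\subset M\subset\overline\Omega\subset\Omega_n$, this closure lies in $\Omega_n$, and being closed in the compact manifold $B$ it is compact. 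Continuity of $u_n$ therefore forces $u_n|_{X_v}$ to be bounded.

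Thus $u_n|_{X_v}$ is a bounded, strictly plurisubharmonic function on $X_v\cong\mc^2_*=\mc^2\setminus\{0\}$. Since the origin is a pluripolar subset of $\mc^2$, the removable singularity theorem for bounded plurisubharmonic functions extends $u_n|_{X_v}$ to a plurisubharmonic function $\tilde u$ on $\mc^2$ that is still bounded above. Restricting $\tilde u$ to any complex line through the origin gives a subharmonic function on $\mc$ bounded above, which is constant by Liouville's theorem on parabolic Riemann surfaces. Hence $\tilde u$ is constant on every complex line through $0$ and therefore constant on $\mc^2$. So $u_n|_{X_v}$ is constant on $\mc^2_*$, contradicting the strict plurisubharmonicity of the restriction.

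The only step that genuinely needs checking is the closure identification $\overline{X_v}^{B}=X_v\cup X_1\cup X_{-1}$; it follows from the dynamics $\phi^{\pm n}(v)\to\pm 1$ together with the choice of a fundamental domain for $\Gamma$ on $\mc^2_*$. The remaining tools — Richberg smoothing, removable singularities for bounded psh functions, and the Liouville theorem for psh functions on $\mc^n$ — are all standard. Notice that the argument actually proves the stronger statement that $\overline\Omega$ admits no pseudoconvex neighborhood in $B$ whatsoever, not merely no pseudoconvex neighborhood basis.
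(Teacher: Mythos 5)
Your argument has a fatal gap at the very first step, and the symptom is visible at the end of your write-up: you claim to prove that $\overline\Omega$ has \emph{no} pseudoconvex neighborhood at all, but $B$ itself is one ($B$ is a compact manifold without boundary, so it is trivially pseudoconvex, and constants are plurisubharmonic exhaustions of it). Any argument that "proves" this stronger statement must be wrong. Concretely, the false step is ``$\Omega_n$ pseudoconvex $\Rightarrow$ $\Omega_n$ carries a smooth strictly plurisubharmonic exhaustion.'' In this paper ``pseudoconvex'' means locally (Levi) pseudoconvex --- that is how the preceding theorem concludes that $\Omega$ is pseudoconvex from the Levi-flatness of $M$ --- and in a general complex manifold this does not even yield a weakly plurisubharmonic exhaustion: the Levi problem fails outside $\mc^n$, which is exactly the phenomenon this family of examples illustrates. (Indeed $\Omega$ itself is pseudoconvex but admits no psh exhaustion, since such an exhaustion would restrict to a psh exhaustion of a properly embedded leaf $X_v\cong\mc^2_*$, and every psh function on $\mc^2_*$ extends across the origin and hence is bounded near it.) Richberg's theorem does not repair this: it smooths functions that are \emph{already} continuous and strictly psh; it does not upgrade psh to strictly psh. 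A smooth strictly psh exhaustion would make $\Omega_n$ Stein by Grauert's theorem, and no neighborhood of $\overline\Omega$ can be Stein for the trivial reason that it contains the compact Hopf surfaces $X_{\pm 1}$ --- so your hypothesis collapses the theorem to a triviality while assuming something unavailable. Note also that your argument never uses the hypothesis $\cap_n\Omega_n=\overline\Omega$, which must enter somewhere since a single pseudoconvex neighborhood does exist.

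The downstream steps (the closure identification $\overline{X_v}=X_v\cup X_1\cup X_{-1}$, removability of the origin for bounded psh functions, and the Liouville argument) are fine, but they are applied to a function that need not exist. For comparison, the paper's proof works with a leaf on the \emph{other} side: it takes $X_\infty\cong\mc^2_*$ inside the complementary disc bundle $\Omega'$, observes that $\Omega_n\cap X_\infty$ is a pseudoconvex open subset of $\mc^2_*$ containing a neighborhood of both ends (because the leaf accumulates only on $X_{\pm1}\subset M\subset\Omega_n$), and notes that for large $n$ this intersection omits interior points of the leaf since the $\Omega_n$ shrink to $\overline\Omega$; the Hartogs extension phenomenon for pseudoconvex domains in $\mc^2$ containing a neighborhood of infinity then gives the contradiction. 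That route only needs pseudoconvexity of $\Omega_n\cap X_\infty$ as an open subset of $\mc^2_*$, which is a local property and therefore actually follows from the hypothesis.
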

\begin{proof}
We argue by contradiction.
Suppose that $\Omega_n$ is a decreasing sequence of pseudoconvex domains $\Omega_n$ in $B$
such that $\cap\Omega_n=\overline\Omega$. By Lemma \ref{lem:loc constant section},
$\tilde\pi\circ s_\infty:\mc^2_*\rightarrow\Omega'$ is a proper holomorphic embedding.
We identify $X_\infty$ with $\mc^2_*$ by identifying $[z,\infty]$ with $z$.
Since $\tilde\pi\circ s_\infty(z)\rightarrow X_1\subset M$ as $z\rightarrow\infty$,
$\Omega_n\cap X_\infty$ is a pseudoconvex domain in $\mc^2_*$
that contains a neighborhood of $\infty\in\mc^2$.
However, by Hartogs extension theorem, a pseudoconvex domain in $\mc^2$ that contains a neighborhood of $\infty$
has to be equal to the whole $\mc^2$. Contradiction.
\end{proof}

Let $\Delta_r=\{v\in\mc; |v|<r\}$.
One may wonder whether $\Omega_r:=\tilde\pi(\mc^2_*\times\Delta_r) (r>1)$ is a pseudoconvex neighborhood basis of $\Omega$.
However, from Lemma \ref{lem:loc constant section}, it is easy to see that
$\Omega_r=B$ for any $r>1$.

\subsection{Disc bundle over the Hopf surface II}\label{subsec:disc bundle hopf:Kahler metric}
We will follow the notations in \S\ref{subsec:disc bundle hopf:p.s.c nbd basis}.
As we mentioned, the Hopf surface $H$ does not admit any K\"{a}hler metric.
So it is very natural to ask whether the flat disc bundle $\Omega$ constructed in \S\ref{subsec:disc bundle hopf:p.s.c nbd basis}
admits a K\"{a}hler metric. In this subsection, we will prove that $\Omega$ admits a complete K\"{a}hler metric.
The essential observation here is that $\Omega$ is biholomorphic to a product.

We have seen that $\Omega$ is foliated by closed submanifolds $X_v$ that are all biholomorphic to $\mc^2_*$ with $v\in \Delta$,
and $X_v$ and $X_w$ are equal if and only if $v, w$ lie in the same orbit with respect to the action of $\Gamma'$ on $\Delta$.
However, just from this fact, it is not obvious whether $\Omega$ is biholomorphic to a $\mc^2_*$ bundle over $\Delta/\Gamma'$.
But we will show that it is indeed the case.

Note that $\Gamma'$ acts on $\Delta$ freely and properly discontinuously.
The quotient space $A=\Delta/\Gamma'$ is a Riemann surface which is clearly not compact.

\begin{lem}\label{lem:quotient annulus}
The quotient space $A$ is holomorphically equivalent to an annulus $A_r:=\{v\in\mc; r<|v|<1\}$ for some $r\in(0,1)$.
\end{lem}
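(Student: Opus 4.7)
The plan is to invoke the classical fact that the quotient of $\Delta$ by a discrete cyclic group generated by a hyperbolic M\"obius transformation is an annulus, and to carry out the conjugation explicitly.

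First, I would observe that $\phi\in \mathrm{Aut}(\Delta)$ is hyperbolic: extended to $\mathbb{P}^1$ it has the two distinct fixed points $\pm 1$, both lying on $\partial\Delta$, and a direct calculation yields $\phi'(1)=\frac{1}{3}$ and $\phi'(-1)=3$. To straighten $\phi$, choose any M\"obius transformation $\psi:\Delta\to\mathbb{H}$ (upper half-plane) with $-1\mapsto 0$ and $1\mapsto\infty$. Then $\tilde\phi:=\psi\circ\phi\circ\psi^{-1}$ is an automorphism of $\mathbb{H}$ fixing both $0$ and $\infty$, hence is a real dilation $\tilde\phi(z)=\lambda z$ with $\lambda\in\{3,1/3\}$, so $A\cong \mathbb{H}/\langle\tilde\phi\rangle$.

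Next, the branch of $\log$ with imaginary part in $(0,\pi)$ is a biholomorphism from $\mathbb{H}$ onto the strip $S=\{0<\mathrm{Im}\,w<\pi\}$, and in these coordinates $\tilde\phi$ becomes the real translation $w\mapsto w+\log\lambda$. The holomorphic map $w\mapsto \exp(2\pi i w/\log\lambda)$ is periodic with period $\log\lambda$ and descends to a biholomorphism from $S/\langle w\mapsto w+\log\lambda\rangle$ onto the annulus $\{\exp(-2\pi^2/|\log\lambda|)<|v|<1\}$, which is exactly $A_r$ with $r=\exp(-2\pi^2/|\log\lambda|)\in(0,1)$.

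There is no real obstacle; the content is just to identify the conjugacy class of $\phi$ and to push it through an explicit uniformization. The only conceptual check is that $\phi$ is hyperbolic, rather than parabolic (which would produce a punctured disc) or elliptic (which would fix an interior point), and this is immediate from the presence of two distinct fixed points on $\partial\Delta$.
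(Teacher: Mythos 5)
Your proof is correct, but it follows a genuinely different route from the paper's. The paper argues softly: since $\pi_1(A)\cong\Gamma'\cong\mathbb Z$, the classification of Riemann surfaces with abelian fundamental group (Farkas--Kra, Theorem IV.6.8) forces $A$ to be either some $A_r$ or the punctured disc $\Delta^*$, and the latter is then excluded by showing that $\mathrm{Aut}(A)$ is disconnected (it contains both a circle's worth of automorphisms induced by the one-parameter group through $\phi$ and the extra involution induced by $z\mapsto -z$), whereas $\mathrm{Aut}(\Delta^*)\cong S^1$ is connected. You instead identify the conjugacy class of $\phi$ directly: the two distinct boundary fixed points $\pm1$ with multipliers $\phi'(1)=1/3$, $\phi'(-1)=3$ show $\phi$ is hyperbolic, and the explicit chain $\Delta\to\mathbb H\to S\to A_r$ exhibits the quotient as an annulus. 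Your computation is sound (the only cosmetic slip is that when $\log\lambda<0$ the map $w\mapsto\exp(2\pi i w/\log\lambda)$ lands in $\{1<|v|<e^{2\pi^2/|\log\lambda|}\}$ and one composes with an inversion or rescaling, which does not affect the conclusion). What your approach buys is more information: it pins down the modulus, $r=e^{-2\pi^2/\log 3}$, and makes the hyperbolic/parabolic dichotomy (annulus versus punctured disc) transparent, whereas the paper's automorphism-group argument, while correct, is less direct. Conversely, the paper's argument is the one that generalizes with no computation to any fixed-point-free cyclic group of automorphisms of $\Delta$ once one knows it is not parabolic.
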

\begin{proof}
Note that the fundamental group of $A$ is isomorphic to $\Gamma'$, which is commutative.
By Theorem IV.6.8 in \cite{Farkas-Kra}, $A$ is biholomorphic to some $A_r$ or the punctured disc $\Delta^*$.
Note that the automorphism group of $\Delta^*$ is just the circle group,
and the automorphism group of $A_r$ have two connected components.
Let $G=\{\phi_r(z)=\frac{z+r}{1+rz}; r\in(-1,1)\}\subset Aut(\Delta)$.
Then $G/\Gamma'$ is naturally identified to a subgroup of $Aut(A)$ which is clearly isomorphic to $S^1$.
Let $\tau\in Aut(\Delta)$ be given by $\tau(z)=-z$.
Then we have $\tau\phi=-\phi^{-1}$. So $\tau$ also induces an automorphism of $A$,
which is also denoted by $\tau$.
It is easy to check that $\phi_r\circ\tau^{-1}=\phi_r\circ\tau=-\phi_{-r}\notin G$
for any $r\neq 0$. So $\tau\notin G/\Gamma'$, which implies that $Aut(A)$ is not connected.
So $A$ has to be biholomorphic to some $A_r$.
\end{proof}
% it's interesting to determine $r$ explicitly.

Recall the construction of $\Omega$.
We have a group $\Gamma$ acting freely and properly discontinuously on $\mc^2_*$,
and a group $\Gamma'$ acting freely and properly discontinuously on $\Delta$.
The map $\sigma:\Gamma\rightarrow\Gamma'$ given by $\gamma_n\mapsto \phi^{n}$ is a group isomorphism
and induces an action of $\Gamma$ on $\Delta$, and further induces an action of $\Gamma$ on $\tilde\Omega=\mc^2_*\times \Delta$.
Then $\Omega:=\tilde\Omega/\Gamma$ is naturally realized as a flat $\Delta$-bundle over the Hopf surface $H=\mc^2_*/\Gamma$.

Now the key observation is that $\Omega$ can also be constructed by revising the roles of $\mc^2_*$ and $\Delta$,
and the roles of $\Gamma$ and $\Gamma'$.
Note that the group isomorphism $\sigma^{-1}:\Gamma'\rightarrow\Gamma$ induces an action of $\Gamma'$ on $\mc^2_*$
and further induces an action of $\Gamma'$ on $\Delta\times\mc^2_*$, which is of course can be identified to $\mc^2_*\times\Delta$.
Now it is obvious that $\Omega$ can be naturally identified to $\Delta\times\mc^2_*/\Gamma'$.
But now by the construction in \S\label{subsec:framework}, $\Omega$ is realized as a flat $\mc^2_*$-bundle over $A=\Delta/\Gamma'$.
Since the action of $\Gamma$ on $\mc^2_*$ is linear,
$\Omega$ can be extended to a rank two holomorphic vector bundle, say $E$, over $A$ by adding the zero section $E_0$ to $\Omega$.

We now apply the famous Oka-Grauert principle to $E$,
which states that $E$ is trivial as a holomorphic vector bundle if and only if it is trivial as a complex topological bundle (see \cite{Forstneric11}).
Note that the isomorphism class of a complex topological bundle of rank 2 over $A$ is determined by a homotopic class of continuous maps from $A$ to
the classifying space $BU(2)=G_2(\mc^\infty)$ (see Theorem 23.10 in \cite{Bott-Tu82}).
Note that $BU(2)$ is simply connected.
Since $A$ is biholomorphic to some $A_r$ by Lemma \ref{lem:quotient annulus}, all continuous maps from $A$ to $BU(2)$
are homotopically trivial.
So all complex topological vector bundle over $A$ are trivial.
Hence $E$ is a holomorphically trivial vector bundle over $A$.
We get the following

\begin{thm}\label{thm:product structure}
$\Omega$ is biholomorphic to the product manifold $A\times\mc^2_*$.
\end{thm}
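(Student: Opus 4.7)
The plan is to realize $\Omega$ as the complement of the zero section in a rank two holomorphic vector bundle over the annulus $A$, and then to trivialize that bundle via the Oka-Grauert principle.

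First I would exploit a symmetry of the construction. On $\tilde\Omega = \mc^2_*\times\Delta$ the group $\Gamma$ acts by $\gamma_n\cdot(z,v) = (2^n z,\phi^n v)$; the very same formula, read via the isomorphism $\sigma^{-1}:\Gamma'\to\Gamma$, defines a free and properly discontinuous action of $\Gamma'$ on $\Delta\times\mc^2_*$ with the factors swapped. The two quotients are canonically biholomorphic under $(z,v)\leftrightarrow(v,z)$, so the framework of \S\ref{subsec:framework}, applied now with $\tilde X = \Delta$ and $F = \mc^2_*$, presents $\Omega$ as a flat $\mc^2_*$-bundle over $A = \Delta/\Gamma'$. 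By Lemma \ref{lem:quotient annulus}, $A$ is an annulus $A_r$.

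Next I would enlarge $\Omega$ to a genuine holomorphic vector bundle. Because $\Gamma$ acts on $\mc^2_*$ by the linear maps $z\mapsto 2^n z$, the action extends across the origin to a linear action on $\mc^2$. Gluing in the missing origin fiberwise produces a rank two holomorphic vector bundle $p:E\to A$ containing $\Omega$ as the complement of the zero section.

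For the main step I would invoke the Oka-Grauert principle: on the Stein manifold $A$, the holomorphic and topological classifications of complex vector bundles agree, so it suffices to prove $E$ topologically trivial. Isomorphism classes of topological rank two complex bundles on $A$ are classified by homotopy classes of maps $A\to BU(2)$. Since $A\simeq A_r$ has the homotopy type of $S^1$ and $\pi_1(BU(2)) = \pi_0(U(2)) = 0$, every such map is null-homotopic. Hence $E$ is topologically, and therefore holomorphically, trivial, and removing the zero section from a trivialization $E\cong A\times\mc^2$ yields the desired biholomorphism $\Omega\cong A\times\mc^2_*$.

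The likeliest point of friction is the opening step: one must verify that the swap is not merely a set-theoretic identification but an isomorphism of complex-analytic quotients, i.e. that the flat $\mc^2_*$-bundle structure over $A$ produced by reversing the roles of $(\Gamma,\mc^2_*)$ and $(\Gamma',\Delta)$ really coincides with the complex manifold $\Omega$. Once that is in place, the extension across the zero section and the $BU(2)$ calculation are essentially formal, and the only remaining ingredient is the Oka-Grauert theorem for Stein base, which applies since any open Riemann surface is Stein.
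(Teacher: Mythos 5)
Your proposal is correct and follows essentially the same route as the paper: swap the roles of $(\Gamma,\mc^2_*)$ and $(\Gamma',\Delta)$ to present $\Omega$ as a flat $\mc^2_*$-bundle over the annulus $A$, extend it to a rank two vector bundle by adding the zero section, and trivialize via Oka--Grauert using the simple connectivity of $BU(2)$ and the homotopy type of $A$. The point you flag as potential friction is indeed the key observation the paper highlights, and your treatment of it is adequate.
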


Note that $A\times\mc^2$ is a Stein manifold, by Lemma \ref{lem:Kahler metric complement},
we get

\begin{thm}
There is a complete K\"{a}hler metric on $\Omega$.
\end{thm}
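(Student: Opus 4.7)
The plan is to leverage the product structure established in Theorem \ref{thm:product structure} and reduce the existence of a complete K\"ahler metric on $\Omega$ to a direct application of Lemma \ref{lem:Kahler metric complement}. By that theorem, $\Omega$ is biholomorphic to $A\times \mc^2_*$, so it suffices to exhibit a complete K\"ahler metric on $A\times \mc^2_*$.

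First, I would observe that $A\times \mc^2$ is a Stein manifold, since $A$ (being an annulus, equivalently a domain in $\mc$) is Stein, $\mc^2$ is Stein, and the product of two Stein manifolds is Stein. Next, I would write
\[
A\times \mc^2_* \;=\; (A\times \mc^2) \setminus (A\times\{0\}),
\]
and note that $A\times\{0\}$ is a closed analytic subset of $A\times \mc^2$ (it is the zero set of the two coordinate functions on the $\mc^2$-factor). Thus $A\times\mc^2_*$ is the complement of a closed analytic subset in a Stein manifold, which is precisely the setting of Lemma \ref{lem:Kahler metric complement}. Applying that lemma furnishes a complete K\"ahler metric on $A\times\mc^2_*$, and transporting it along the biholomorphism of Theorem \ref{thm:product structure} gives the desired metric on $\Omega$.

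There is essentially no obstacle in this argument, since the genuine work has already been carried out in Lemma \ref{lem:quotient annulus} and Theorem \ref{thm:product structure}, where the product decomposition was established via the Oka--Grauert principle. The only point to verify carefully is that the isomorphism class of being Stein and the Demailly-type construction of complete K\"ahler metrics are preserved under biholomorphism, which is immediate from the invariance of plurisubharmonic exhaustion functions under biholomorphic maps.
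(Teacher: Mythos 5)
Your proposal is correct and follows exactly the paper's argument: the authors likewise invoke Theorem \ref{thm:product structure} to identify $\Omega$ with $A\times\mc^2_*$, note that $A\times\mc^2$ is Stein, and apply Lemma \ref{lem:Kahler metric complement} to the complement of the analytic set $A\times\{0\}$. Your write-up merely spells out the details that the paper leaves implicit.
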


The $\mathbb P^1$-bundle $B$ over $H$ has no K\"{a}hler metric since it contains
the compact submanifolds $X_{\pm 1}$, which are biholomorphic to the Hopf surface.
Furthermore it seems that $B$ is not bimeromorphic to a compact K\"{a}hler manifold.
It is also unknown whether there exists a fiber bundle over $H$ with compact fibers
such that the total space is K\"{a}hler.

\subsection{Disc bundle over the Hopf surface III}\label{subsec:disc bundle hopf:hyperconvex}
In this subsection, we construct a disc bundle over the Hopf surface $H$
such that the total space has a real analytic exhaustive bounded plurisubharmonic function
but has no nonconstant holomorphic functions.
We will follow the general idea discussed in \S \ref{subsec:framework}.

We again denote $\mc^2\backslash 0$ by $\mc^2_*$,
and let $\Gamma=\{\gamma_n:z\mapsto 2^nz;n\in \mathbb Z\}\subset Aut(\mc^2_*)$.
The quotient space $H=\mc^2_*/\Gamma$ is the Hopf surface.

Fix an irrational real number $\lambda$ and let $\alpha=e^{\lambda 2\pi i}$.
Then $\phi(v)=\alpha v$ is a rotation of $\mathbb P^1$ and let $\Gamma'=\{\phi^n;n\in\mathbb Z\}$.
Note that $\Gamma'$ can also be viewed as an automorphism group of any disc centered at the origin.

The map $\sigma:\Gamma\rightarrow\Gamma'; \ \gamma_n\mapsto \phi^n$ is a group isomorphism,
and induces an action of $\Gamma$ on $\mathbb P^1$, and further induces an action of $\Gamma$ on the product $\tilde B=\mc^2_*\times \mathbb P^1$.
Let $B=\tilde B/\Gamma$, which is a flat $\mathbb P^1$-bundle over $H$.
Let $\tilde\Omega=\mc^2_*\times\Delta$ and let $\Omega=\tilde\Omega/\Gamma$,
then $\Omega$ is a disc bundle over $H$ and is a domain in $B$ with real analytic pseudoconvex boundary.
Our aim is to prove the following

\begin{prop}
There is a real analytic bounded exhaustive plurisubharmonic function on $\Omega$;
but there is no nonconstant holomorphic function on $\Omega$.
\end{prop}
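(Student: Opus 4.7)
The plan is to handle the two assertions separately, building the plurisubharmonic exhaustion by hand from the defining data, and then ruling out nonconstant holomorphic functions by lifting to $\mc^2_*\times\Delta$, applying Hartogs extension in the first factor, and exploiting that the $\mathbb Z$-orbit $\{\alpha^n\}$ is dense in $S^1$ because $\lambda$ is irrational.

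For the bounded plurisubharmonic exhaustion, the crucial observation is that although $\phi$ is a nontrivial automorphism of $\Delta$, it is simply the rotation $v\mapsto\alpha v$ with $|\alpha|=1$. Hence the function $(z,v)\mapsto |v|^2$ is invariant under the action $\gamma_n\cdot(z,v)=(2^nz,\alpha^nv)$ and descends to a real analytic function $\rho$ on $\Omega$ taking values in $[0,1)$. Its complex Hessian is a positive semidefinite rank-one $(1,1)$-form coming from the $v$-direction, so $\rho$ is plurisubharmonic, and for each $c<1$ the sublevel set $\{\rho<c\}$ is the flat $\Delta_{\sqrt{c}}$-subbundle $\mc^2_*\times\Delta_{\sqrt{c}}/\Gamma$ over the compact Hopf surface $H$, hence relatively compact in $\Omega$. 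Thus $\rho$ (equivalently $\rho-1$) is the desired bounded real analytic plurisubharmonic exhaustion, and $\Omega$ is in fact hyperconvex.

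For the nonexistence of nonconstant holomorphic functions, let $f\in\mathcal O(\Omega)$ and lift it to a $\Gamma$-invariant $\tilde f\in\mathcal O(\mc^2_*\times\Delta)$ satisfying $\tilde f(2^nz,\alpha^nv)=\tilde f(z,v)$. Because $\{0\}\times\Delta$ has complex codimension two in $\mc^2\times\Delta$, the Hartogs extension theorem yields $F\in\mathcal O(\mc^2\times\Delta)$ restricting to $\tilde f$ off $\{0\}\times\Delta$, and by the identity principle the functional equation $F(2z,\alpha v)=F(z,v)$ persists on the enlarged domain. Setting $z=0$ shows $g(v):=F(0,v)$ is invariant under $v\mapsto\alpha v$, so $g(\alpha^nv)=g(v)$ for all $n$; irrationality of $\lambda$ makes the orbit $\{\alpha^nv_0\}_{n\in\mathbb Z}$ dense in the circle $|w|=|v_0|$, which forces $g$ to be constant on every such circle, and expanding $g$ as a power series at $0$ then forces $g\equiv c$ for a single constant $c$.

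To conclude, fix an arbitrary $(z,v)\in\mc^2\times\Delta$ and let $n\to-\infty$ in the functional equation: $2^nz\to 0$, while $\{\alpha^nv\}$ lies on the compact circle $|w|=|v|$, so after passing to a subsequence $\alpha^{n_k}v\to w_0$ we obtain $F(z,v)=F(2^{n_k}z,\alpha^{n_k}v)\to F(0,w_0)=c$ by continuity of $F$. Hence $F$, and therefore $f$, is constant. The main obstacle is really only bookkeeping: checking that the Hartogs-extended $F$ inherits the functional equation, and then unlocking that equation in the direction $n\to-\infty$ where the $\mc^2$-factor contracts to $0$ while the disc factor merely rotates. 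Both points flow from the decisive asymmetry between the expanding $\Gamma$-action on $\mc^2_*$ and its isometric action on $\Delta$, which is also what makes the easy exhaustion $|v|^2$ available in the first place.
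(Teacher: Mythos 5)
Your proof is correct and follows essentially the same route as the paper: the rotation-invariance of $|v|^2$ gives the bounded exhaustion, and the nonexistence of functions comes from lifting, extending across $\{0\}\times\Delta$ by Hartogs/Riemann, and exploiting the $\Gamma$-invariance at the fixed point $z=0$. The only (harmless) difference is in the endgame: the paper reads off $a_{ijk}=a_{ijk}2^{n(i+j)}\alpha^{nk}$ from the full three-variable Taylor expansion at the origin, whereas you first show $F(0,\cdot)$ is constant and then propagate via the contracting dynamics $n\to-\infty$; both are valid.
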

\begin{proof}
Let $\tilde\rho:\tilde\Omega\rightarrow\mr$ be the function given by $(z,v)\mapsto |v|^2$.
It is clear that $\tilde\rho$ is a $\Gamma$-invariant p.s.h function on $\tilde\Omega$.
So it induces a p.s.h function on $\Omega$ which is exhaustive on $\Omega$.

For the second statement, assume $f$ is a holomorphic function on $\Omega$.
Let $\tilde f$ be the pull back of $f$ on $\tilde\Omega=\mc^2_*\times \Delta$.
Then $\tilde f$ is invariant under the action of $\Gamma$.
More precisely, for all $(z,v)\in\mc^2_*\times\Delta$, for any $n\in \mathbb Z$,
we have $\tilde f(2^nz,\alpha^nv)=\tilde f(z,v)$.

By Riemann's removable singularity theorem, $\tilde f$ can be extended to a holomorphic function
on $\mc^2\times\Delta$. We expand $\tilde f$ near (0,0,0) as
$$\tilde f(z_1, z_2, v)=\sum_{i,j,k\geq 0}a_{ijk}z^i_1z^j_2v^k.$$
Then we have
$$\tilde f(2^nz_1, 2^nz_2, \alpha^n v)=\sum_{i,j,k\geq 0}a_{ijk}2^{n(i+j)}\alpha^{nk}z^i_1z^j_2vk.$$
From the invariance of $\tilde f$, we get
$$a_{ijk}=a_{ijk}2^{n(i+j)}\alpha^{nk},\; \text{for\ all}\ i,j,k\geq 0,\ n\in\mathbb Z.$$
Since $\alpha$ is irrational, we get $a_{ijk}=0$ if $i+j+k>0$,
and hence $\tilde f$ is constant.
So $f$ is constant.
\end{proof}

\subsection{$\mc^*$-bundle over the Hopf surface}\label{subsec:C^* bundle and Kahler metric}
In this subsection, we construct
a flat $\mc^*$-bundle over the Hopf surface $H$ such that the total space has a complete K\"{a}hler metric.

Recall that $\Gamma=\{\gamma_n:z\mapsto 2^nz; n\in\mathbb Z\}$ acts freely and properly discontinuously on $\mc^2_*$.
We define an action of $\Gamma$ on $\mathbb P^1$ by associating $\gamma_n$ to the map $v\mapsto 2^nv, v\in\mathbb P^1$.
Then $\mc^*\subset\mathbb P^1$ is a $\Gamma$-invariant open subset and
$\Gamma$ acts on $\mc^*$ freely and properly discontinuously.
The quotient space $S=\mathbb C^*/\Gamma$ is an elliptic curve.

We can now define an action of $\Gamma$ on $\tilde B=\mc^2_*\times\mathbb P^1$
by acting on both factors. Let $B=\tilde B/\Gamma$ be the quotient space.
Let $\tilde\Omega=\mc^2_*\times\mc^*$ and $\Omega=\tilde\Omega/\Gamma$.
Similarly to the consideration in \S\ref{subsec:disc bundle hopf:Kahler metric},
we can look at $\Omega$ in two ways.
One is to view it as a $\mc^*$-bundle over the Hopf surface $H$,
and the other is to view it as a $\mc^2_*$ bundle over $S$.
If taking the second viewpoint, $\Omega$ can be extended to a rank two holomorphic
vector bundle say $E$ over $S$ by adding the zero section.

By Lemma \ref{lem:general existece Kahler metric}, $E_\infty$ (see \S \ref{subsec:framework} for definition) is a compact K\"ahler manifold.
Note that $\Omega\subset E_\infty$ is the complement of an analytic set in $E_\infty$.
By Lemma \ref{lem:Kahler metric complement}, $\Omega$ has a complete K\"{a}hler metric.

\begin{thm}
The flat $\mc^*$-bundle $\Omega$ over the Hopf surface $H$ has a complete K\"{a}hler metric.
\end{thm}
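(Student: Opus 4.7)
The plan is to exploit the dual viewpoint on $\Omega$ emphasized already in \S\ref{subsec:disc bundle hopf:Kahler metric}, realize $\Omega$ as the complement of a closed analytic set in a compact K\"ahler manifold, and then invoke Lemma \ref{lem:Kahler metric complement}. Specifically, I would swap the roles of the two factors of $\tilde\Omega=\mc^2_*\times\mc^*$: since $\Gamma$ also acts freely and properly discontinuously on the $\mc^*$ factor by powers of $2$ with quotient the elliptic curve $S=\mc^*/\Gamma$, the construction of \S\ref{subsec:framework} lets us view $\Omega$ as a flat $\mc^2_*$-bundle over $S$, with transition functions lying in the subgroup $\{z\mapsto 2^n z\}\subset GL(2,\mc)$.

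Because these transition functions are linear, this $\mc^2_*$-bundle extends to a rank two holomorphic vector bundle $E$ over $S$ by adjoining the zero section $E_0$, so that $\Omega=E\setminus E_0$. I would then form the projective bundle compactification $p:E_\infty\to S$ from the construction preceding Lemma \ref{lem:general existece Kahler metric}, giving $\Omega=E_\infty\setminus(E_0\cup D)$, where $D=E_\infty\setminus E$ is the divisor at infinity. Since $S$ is an elliptic curve and hence projective K\"ahler, Lemma \ref{lem:general existece Kahler metric} produces a K\"ahler metric on the compact manifold $E_\infty$.

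To finish, I would observe that both $E_0$ and $D$ are closed analytic subsets of $E_\infty$, so $\Omega$ is the complement of a closed analytic subset in a compact K\"ahler manifold, and Lemma \ref{lem:Kahler metric complement} immediately yields a complete K\"ahler metric on $\Omega$. There is no genuine obstacle in this outline, since every step is furnished by results already established earlier in the paper. The only point deserving mild care is verifying that the role swap really yields the same complex manifold $\Omega$, which is automatic from the symmetry of the diagonal $\Gamma$-action on $\mc^2_*\times\mc^*$.
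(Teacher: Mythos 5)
Your proposal is correct and follows essentially the same route as the paper: view $\Omega$ as a flat $\mc^2_*$-bundle over the elliptic curve $S$, complete it to a vector bundle $E$ and then to the projective bundle $E_\infty$, apply Lemma \ref{lem:general existece Kahler metric} to get a K\"ahler metric on $E_\infty$, and conclude via Lemma \ref{lem:Kahler metric complement} since $\Omega$ is the complement of the analytic set $E_0\cup D_\infty$. No substantive differences from the paper's argument.
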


We take a more careful look at $B$, which is a compactification of $\Omega$.
$\Omega$ is a Zariski open set of $B$ and the complement $B\backslash\Omega=X_0\cup X_\infty$,
where $X_0=\{[z,0];z\in\mc^2_*\}$ and $X_\infty=\{[z,\infty];z\in\mc^2_*\}$
are two copies of the Hopf surface $H=\mc^2_*/\Gamma$.
On the other hand, by identifying $\mathbb P^2=\mc^2\cup\mathbb P^1$,
we can compactify $E$ naturally to a $\mathbb P^2$-bundle $E_\infty$ over $S$ as shown in \S \ref{subsec:framework}.
$E_\infty$ is a compactification of $\Omega$ that is different from $B$.
The complement $E_\infty\backslash \Omega$ of $\Omega$ in $E_\infty$ also
contains two connected components, namely the zero section $E_0$ of $E$
and $D_\infty:=E_\infty\backslash E$ which is a $\mathbb P^1$-bundle over the elliptic curve $S$.

It is interesting to compare $X_\infty$ and $D_\infty$,
which are connected boundary components in the two different compactifications of $\Omega$.
As mentioned above, $D_\infty$ is a $\mathbb P^1$-bundle over $S$.
Recall that $H=\mc^2_*/\Gamma$, so it can be fibred over $\mc^2_*/\mc^*$
with fibers biholomorphic to $\mc^*/\Gamma=S$.
So $X_\infty$ is an $S$-bundle over $\mathbb P^1$.
So it seems that $X_\infty$ and $D_\infty$ has some duality relation,
which deserves a further  study.

There is no K\"{a}hler metric on $B$ since it contains $H$ as a submanifold.
But $E_\infty$ is projective manifold by Lemma \ref{lem:general existece Kahler metric}.
One can prove that $B$ and $E_\infty$ are not bimeromorphic.
In fact, if $B$ can not be bimeromorphic to any K\"ahler manifold.
If this is not true, there will be a K\"ahler current say $T$ on $B$.
It is then clear that the direct image $p_*T$ is a K\"ahler current on $H$, 
recalling that $B$ is a fiber bundle over $H$ and $p:B\rightarrow H$ is the projection.
Then $p_*T$ gives a nonzero class in $H^2(H)$.
This is a contradiction since the second cohomology group of $H$ vanishes. 

\section{Flat bundles over compact Riemann surfaces}

\subsection{Flat bundles over elliptic curves}\label{subsec:flat bundle elliptic curve}
The aim is to construct compact projective manifolds $B$ and some Stein domain $\Omega$ in $D$
with real analytic boundary such that the closure of $\Omega$ in $B$ has no pseudoconvex neighborhood basis.
We will realize $B$ as a flat $\mathbb P^1$ bundle over elliptic curves.

Let $\Gamma$ be a lattice in $\mc$, then $\Sigma=\mc/\Gamma$ is an elliptic curve.
Let $\tilde\sigma:\Gamma\rightarrow\mathbb Z$ be a surjective group morphism.
Then $\sigma:\Gamma\rightarrow Aut(\mathbb P^1)$ given by $\gamma\mapsto \phi^{\tilde\sigma(\gamma)}$
is a group morphism, where $\phi(v)=\frac{v+1/2}{1+v/2}$ is a automorphism of $\mathbb P^1$.
Through $\sigma$, $\Gamma$ acts on $\mathbb P^1$ and further acts on $\tilde B:=\mc\times\mathbb P^1$ by acting on each factor.
The quotient space $B=\tilde B/\Gamma$ is a complex manifold, which is a $\mathbb P^1$-bundle over $\Sigma$.
Let $\tilde\pi:\tilde B\rightarrow B$ and $\pi:\mc\rightarrow \Sigma$ be the quotient maps,
and let $\tilde p:\tilde B\rightarrow\mc$ and $p:B\rightarrow\Sigma$ be the bundle maps.
We have $p\circ\tilde\pi=\pi\circ\tilde p$.

Since the action of $\Gamma$ on $\mathbb P^1$ fix the $\pm 1$,
by the same argument as in proof of Lemma \ref{lem:general existece Kahler metric},
we can prove that $B$ is a projective manifold.

Let $\Delta^*=\mathbb P^1\backslash\overline\Delta$.
Let $\tilde\Omega=\mc\times\Delta, \tilde\Omega^*=\mc\times\Delta^*$.
Let $\Omega=\tilde\Omega/\Gamma, \Omega^*=\tilde\Omega^*/\Gamma$, which are disc bundles over $\Sigma$.
The boundary of $\Omega$ and $\Omega^*$ in $B$ are equal and equal to $\mc\times S^1/\Gamma$,
which is a real analytic Levi-flat hypersurface in $B$.

The quotient space $\mc/\ker\sigma$ is biholomorphic to $\mc^*$,
and the action of $\Gamma/\ker\sigma$ on $\mc/\ker\sigma$ corresponds to multiplications on $\mc^*$.
By the discussion in \S \ref{subsec:framework},
 $\Omega$ and $\Omega^*$ are biholomorphic to $\mc^*$ bundles over the annulus $A:=\Delta/\sigma(\Gamma)$,
whose transition functions are given by multiplications on $\mc^*$.
So $\Omega$ and $\Omega^*$ can be extended to holomorphic line bundles over $A$ by adding zero sections.
By the Oka-Grauert principle and by the same argument as in the proof of Theorem \ref{thm:product structure},
both $\Omega$ and $\Omega^*$ are biholomorphic to the product $A\times\mc^*$,
and hence are Stein.

\begin{thm}\label{thm:no psc nbd basis elliptic curve}
The flat $\mathbb P^1$-bundle $B$ over $\Sigma$ is projective.
The domain $\Omega$ in $B$ is Stein and its boundary is a real analytic Levi-flat hypersurface in $B$.
$\overline\Omega$ has no pseudoconvex neighborhood basis in $B$.
\end{thm}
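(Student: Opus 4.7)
Of the three assertions, the projectivity of $B$, Steinness of $\Omega$, and real-analytic Levi-flatness of $\partial\Omega=M$ have essentially been established already: projectivity follows exactly as in Lemma \ref{lem:general existece Kahler metric} since $\sigma(\Gamma)$ fixes $\pm 1\in\mathbb P^1$; Steinness was just obtained via $\Omega\cong A\times\mc^*$; and $M=\tilde\pi(\mc\times S^1)$ is real-analytic (image of a real-analytic hypersurface under the local biholomorphism $\tilde\pi$) and Levi-flat (foliated by the complex leaves $X_v$, $v\in S^1$). So the real content is the no-neighborhood-basis assertion.

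The plan is to imitate the Hopf surface argument of Section \ref{sec:flat bundle hopf} using the leaf $X_\infty\subset\Omega^*$. Since $\infty$ is not fixed by $\phi$, Lemma \ref{lem:loc constant section} shows that $\tilde\pi\circ s_\infty$ descends to a proper holomorphic embedding $\mc/\ker\tilde\sigma\cong\mc^*\hookrightarrow B$ with image $X_\infty$; and because $1$ is attracting and $-1$ repelling for $\phi$, the two ends of $X_\infty$ accumulate on $X_1\cup X_{-1}\subset M\subset\overline\Omega$. If $\{\Omega_n\}$ is a hypothetical pseudoconvex neighborhood basis of $\overline\Omega$, then for each $n$, in the $\mc^*$-parametrization $c$ of $X_\infty$, the set $\Omega_n\cap X_\infty$ contains polar-cap neighborhoods $\{|c|<\epsilon_n\}\cup\{|c|>R_n\}$ for some $0<\epsilon_n<R_n$.

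The Hopf argument does not transfer verbatim: there, the leaf was two-dimensional and the classical Hartogs theorem on $\mc^2$ forced $\Omega_n\cap X_\infty=X_\infty$; here $X_\infty$ is only one-dimensional. To recover a Hartogs-type obstruction, I would thicken $X_\infty$ to a two-dimensional Stein tube. Pick a small disc $D\subset\mathbb P^1$ around $\infty$ with $\phi^n(D)\cap D=\emptyset$ for all $n\neq 0$ (possible as $\infty$ has trivial $\phi$-stabilizer) and set $T:=\tilde\pi(\mc\times D)\cong\mc^*\times D$. Then $T$ is a Stein open neighborhood of $X_\infty$ in $\Omega^*$ whose closure in $B$ meets both $X_1$ and $X_{-1}$; the corresponding convergence $[z,v]\to X_{\mp 1}$ as $\mathrm{Re}\,z\to\pm\infty$ is uniform for $v\in\overline D$, so $\Omega_n\cap T$ contains the two-dimensional collars $\{0<|c|<\epsilon_n\}\times D$ and $\{|c|>R_n\}\times D$. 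Adjoining the analytic curves $\{c=0\}\times D$ and $\{c=\infty\}\times D$ extends $\Omega_n\cap T$ to a pseudoconvex open subset $\hat\Omega_n\subset\mathbb P^1\times D$ containing two disc-caps $\{|c|<\epsilon_n\}\times D$ and $\{|c|>R_n\}\times D$ around the poles of $\mathbb P^1$.

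A slicing/Kiselman-type argument on $\mathbb P^1\times D$, leveraging the plurisubharmonicity of the logarithm of the fiber radius in the $\mathbb P^1$-direction, should then force $\hat\Omega_n\supset\mathbb P^1\times D'$ for some concentric subdisc $D'\subset D$, giving $T\cap\Omega_n\supset\mc^*\times D'$ as a full tubular neighborhood of $X_\infty$ in $T$. Intersecting over $n$ would then yield $X_\infty\subset\bigcap_n\Omega_n=\overline\Omega$, contradicting $X_\infty\subset\Omega^*$. The main obstacle is this final slicing step: the naive version is not enough (a product of the two caps with $D$ is a pseudoconvex subset of $\mathbb P^1\times D$ with the required polar caps but no full tube), so one must use extra structure coming from how $\hat\Omega_n$ sits inside $B$ --- namely that $\Omega_n$ globally contains entire neighborhoods of the compact leaves $X_{\pm 1}$ whose $\Gamma$-equivariant behavior propagates constraints on $\hat\Omega_n$ across all fibers --- in order to rule out a ``missing middle'' and force the full tube.
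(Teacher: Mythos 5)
Your handling of the first two assertions is fine, but the core claim --- that $\overline\Omega$ has no pseudoconvex neighborhood basis --- is not actually proved, and you say so yourself. After thickening $X_\infty$ to the tube $T\cong\mc^*\times D$ and compactifying to $\mathbb P^1\times D$, the only information you have extracted from a putative pseudoconvex neighborhood $\Omega_n$ is ``a pseudoconvex open subset of $\mathbb P^1\times D$ containing two polar caps,'' and your own example $(\{|c|<\epsilon\}\cup\{|c|>R\})\times D$ shows that this is consistent with the set containing no full fiber $\mathbb P^1\times\{w\}$. There is no Hartogs figure here, so no slicing or Kiselman-type minimum principle can close the argument from this local data alone; the promised rescue via ``extra structure coming from how $\hat\Omega_n$ sits inside $B$'' is never made precise. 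The entire difficulty of the theorem is concentrated at exactly the step you leave open.

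The repair is to drop the one-dimensional leaf and its tube and work with the whole complementary disc bundle $\Omega^*=\tilde\pi(\mc\times\Delta^*)$, which you already know is Stein (biholomorphic to $A\times\mc^*$) and, crucially, two-dimensional; this is what the paper does. If $W$ is a pseudoconvex neighborhood of $\overline\Omega$ with $W\not\supset\Omega^*$, then since $\partial\Omega^*=M\subset\overline\Omega\subset W$ and $B$ is compact, the set $\Omega^*\setminus W=\overline{\Omega^*}\setminus W$ is a nonempty compact subset of $\Omega^*$. Hence $W\cap\Omega^*$ is a pseudoconvex (so Stein) open subset of the Stein surface $\Omega^*$ whose complement in $\Omega^*$ is a nonempty compact set --- impossible, since by the Hartogs phenomenon every holomorphic function on the complement of a compact set in a connected Stein surface extends across it, contradicting holomorphic convexity of $W\cap\Omega^*$ at a boundary point lying in $\Omega^*$. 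Your tube $T$ is a piece of $\Omega^*$, but restricting to $T$ destroys the one global fact that makes the proof work: inside all of $\Omega^*$ the missing set $\Omega^*\setminus\Omega_n$ is automatically compact, whereas inside $T$ it need not be. That compactness, not any fiberwise propagation, is the input your argument is missing.
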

\begin{proof}
The first statement follows from the above discussion.
For the last statements, assume $D$ is a small pseudoconvex neighborhood of $\overline\Omega$ in $B$.
Then $D\cap\Omega^*$ is a pseudoconvex domain in $\Omega^*$.
By construction, $\Omega^*\backslash D$ is a compact set.
This is impossible since $\Omega^*$ is Stein.
\end{proof}

\subsection{Flat bundles over compact Riemann surfaces of positive genus}
Applying Mok's solution of the Serre problem for open Riemann surfaces \cite{Mok81},
we can generalize the construction in \S\ref{subsec:flat bundle elliptic curve} to all compact Riemann surfaces of positive genus.
The aim is to construct compact projective manifolds $B$ and some Stein domain $\Omega$ in $B$
with real analytic boundary such that the closure of $\Omega$ in $B$ has no pseudoconvex neighborhood basis.
Here $B$ is realized as a flat $\mathbb P^1$-bundle over a compact Riemann surfaces and $\Omega$ is the
corresponding disc bundle in $B$.

Let $X$ be a compact Riemann surface with genus $g\geq 1$.
Let $\tilde X$ be the universal covering of $X$ with $X=\tilde X/\Gamma'$,
where $\Gamma'\subset Aut(\tilde X)$ is isomorphic to the fundamental group of $X$.
By the uniformization theorem, $\tilde X$ is biholomorphic to $\mc$ if $g=1$ and biholomorphic to $\Delta$ if $g>1$.

It is well known from topology that $H_1(X,\mathbb Z)\approx \Gamma'/[\Gamma',\Gamma']$,
where $[\Gamma',\Gamma']$ is the normal subgroup of $\Gamma'$ generated by all elements of the form
$aba^{-1}b^{-1}$ with $a,b\in\Gamma'$ (see Theorem 2A.1 in \cite{Hatcher01}).
Note that $H^1(X,\mathbb Z)\approx \mathbb Z^{2g}$, there is a surjective group morphism $\sigma':\Gamma'\rightarrow\mathbb Z$.

Let $D=\tilde X/\ker\sigma'$ and let $\Gamma=\Gamma'/\ker\sigma'$.
Then $D$ is an open Riemann surface and  $\Gamma$ is isomorphic to $\mathbb Z$ which acts freely and properly discontinuously on $D$.
The quotient space $D/\Gamma$ is just $X$.

Let $\gamma$ be a generator of $\Gamma$.
Let $\sigma:\Gamma\rightarrow Aut(\mathbb P^1)$ be the group morphism given by $\gamma^n\mapsto \phi^n$,
where $\phi(v)=\frac{v+1/2}{1+v/2}$ is an automorphism of $\mathbb P^1$.
Then $\sigma$ induces an action of $\Gamma$ on $\mathbb P^1$ and further induces an action of $\Gamma$ on $\tilde B=D\times\mathbb P^1$.
The quotient $B=\tilde B/\Gamma$ if a flat $\mathbb P^1$-bundle over $X$.

Let $\Delta^*=\mathbb P^1\backslash\overline\Delta$.
Let $\tilde\Omega=D\times\Delta, \tilde\Omega^*=D\times\Delta^*$.
Let $\Omega=\tilde\Omega/\Gamma, \Omega^*=\tilde\Omega^*/\Gamma$, which are disc bundles over $X$.
The boundary of $\Omega$ and $\Omega^*$ in $B$ are equal and equal to $D\times S^1/\Gamma$,
which is a real analytic Levi-flat hypersurface in $B$.

\begin{lem}\label{lem:projectivity no vector bd}
$B$ is a projective manifold.
\end{lem}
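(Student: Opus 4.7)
The plan is to mimic the argument used for the elliptic curve case in Theorem \ref{thm:no psc nbd basis elliptic curve}, which in turn is modeled on Lemma \ref{lem:general existece Kahler metric}: produce a line bundle on $B$ whose restriction to every fiber of $p:B\to X$ is ample, and then invoke Proposition \ref{prop:submersion case}, recalling that a compact Riemann surface $X$ is automatically projective.

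The key observation is that the automorphism $\phi(v)=(v+1/2)/(1+v/2)$ of $\mathbb{P}^1$ fixes the two points $v=\pm 1$. Consequently the constant sections $s_{\pm 1}:D\to D\times\mathbb{P}^1$, $z\mapsto (z,\pm 1)$, are $\Gamma$-equivariant with respect to the $\Gamma$-action defined by $\sigma$, so they descend to holomorphic sections of $p:B\to X$. Let $X_{+1}\subset B$ be the image of the descended section at $v=1$; it is a smooth effective divisor on $B$ that meets each fiber transversally in a single point. Set $L=\mathcal{O}_B(X_{+1})$; then $L|_{p^{-1}(x)}\cong \mathcal{O}_{\mathbb{P}^1}(1)$ for every $x\in X$, which is (very) ample.

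Now $p:B\to X$ is a holomorphic submersion between compact complex manifolds (compactness of $B$ follows because $\mathbb{P}^1$ is compact and $X$ is compact), and $X$ is projective. Applying Proposition \ref{prop:submersion case} to the bundle $L$, we conclude that $B$ is projective.

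The potentially delicate step is verifying that the section $s_{+1}$ really descends to a genuine holomorphic embedding $X\hookrightarrow B$ (so that $X_{+1}$ is a Cartier divisor and $\mathcal{O}_B(X_{+1})$ makes sense), but this is exactly statement (3) of Lemma \ref{lem:loc constant section} applied to $\tilde{X}\rightsquigarrow D$, $F=\mathbb{P}^1$, and the fixed point $v=+1$: since $v=+1$ is fixed by $\Gamma$, the action of $\Gamma$ on the singleton $\{+1\}$ is trivially free and properly discontinuous (alternatively, this is part (4) of the same lemma, showing $X_{+1}\cong X$). Thus no real obstacle arises, and the argument proceeds exactly parallel to the elliptic curve case, with $\mathbb{C}$ replaced by the open Riemann surface $D$.
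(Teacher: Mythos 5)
Your proof is correct and follows essentially the same route as the paper: the fixed point $v=1$ of $\phi$ yields a section of $p:B\to X$ whose image is a divisor, the associated line bundle restricts to $\mathcal{O}_{\mathbb{P}^1}(1)$ on each fiber, and Proposition \ref{prop:submersion case} (with $X$ projective as a compact Riemann surface) gives projectivity of $B$. Your extra care in checking that the constant section at the fixed point descends via Lemma \ref{lem:loc constant section}(4) is a welcome detail the paper leaves implicit.
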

\begin{proof}
Viewing $B$ as a $\mathbb P^1$-bundle over $X$, the transition functions preserves $1\in\mathbb P^1$.
Let $L_1$ be the line bundle over $B$ corresponding to the divisor $X\times\{ 1\}$.
Then $L_1$ is positive when restricted to each fiber of $B\rightarrow X$.
By Proposition \ref{prop:submersion case}, $B$ is projective.
\end{proof}

\begin{lem}
Both $\Omega$ and $\Omega^*$ are Stein manifolds.
\end{lem}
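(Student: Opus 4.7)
The plan is to invoke Mok's solution of the Serre problem \cite{Mok81}, after reversing the roles of base and fiber exactly as in Subsection \ref{subsec:disc bundle hopf:Kahler metric}. Since $\phi$ is the same hyperbolic automorphism of $\Delta$ used in Lemma \ref{lem:quotient annulus}, the quotient $A:=\Delta/\Gamma$ is biholomorphic to an annulus; the involution $v\mapsto 1/v$ conjugates the action of $\phi$ on $\Delta^*=\mathbb P^1\setminus\overline\Delta$ to an action of the same type on $\Delta$, so $A^*:=\Delta^*/\Gamma$ is an annulus as well. In particular both $A$ and $A^*$ are Stein.

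Next I would rewrite $\Omega=(D\times\Delta)/\Gamma$ as a holomorphic fiber bundle $q:\Omega\to A$ with fiber $D$. The second projection $D\times\Delta\to\Delta$ is $\Gamma$-equivariant and descends to $q$. A small disc $U\subset A$ lifts to a disjoint union $\bigsqcup_{n\in\mathbb Z}\phi^n(U_0)\subset\Delta$, and choosing any one component yields a holomorphic trivialization $q^{-1}(U)\cong D\times U$; passing from the component indexed by $m$ to the component indexed by $n$ acts on the $D$-factor by the automorphism $\gamma^{m-n}\in\Gamma\subset Aut(D)$. Thus $q$ is a flat holomorphic fiber bundle, and its fiber $D$ is an open (i.e.\ noncompact) Riemann surface because $D/\Gamma=X$ is compact while $\Gamma\cong\mathbb Z$ is infinite. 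The identical construction realizes $\Omega^*$ as a flat holomorphic $D$-bundle over the annulus $A^*$.

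Since $A$ and $A^*$ are Stein and the fiber $D$ is an open Riemann surface, Mok's theorem \cite{Mok81} immediately gives that $\Omega$ and $\Omega^*$ are Stein. The genuinely substantive step is recognizing this second bundle description of $\Omega$ (with Stein base and open-Riemann-surface fiber), not the subsequent appeal to the Serre problem; once the flat bundle $q:\Omega\to A$ is in place there is no further obstacle, and the same remark applies verbatim to $\Omega^*$.
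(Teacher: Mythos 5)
Your proposal is correct and follows essentially the same route as the paper: reverse the roles of base and fiber, identify $\Delta/\Gamma$ (and $\Delta^*/\Gamma$) with an annulus via Lemma \ref{lem:quotient annulus}, and apply Mok's solution of the Serre problem for open Riemann surface fibers over a Stein base. The only cosmetic difference is that the paper first notes $\Omega\cong\Omega^*$ (via $v\mapsto 1/v$) and treats a single case, whereas you handle the two bundles in parallel; your extra detail on the local trivializations and transition maps is a harmless expansion of what the paper delegates to \S\ref{subsec:framework}.
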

\begin{proof}
It is clear that $\Omega$ and $\Omega^*$ are biholomorphic.
From the discussion in \S \ref{subsec:framework}, $\Omega$ is a flat disc bundle over $X$.
Since $\sigma(\Gamma)=\{\phi^n;n\in\mathbb Z\}$ acts freely and properly discontinuously on $\Delta$,
$\Omega$ can also be realized as a flat $D$-bundle over $A=\Delta/\Gamma$.
We have proven that $A$ is biholomorphic to an annulus in $\mc$ in Lemma \ref{lem:quotient annulus}.
By Mok's solution of the Serre's problem for open Riemann surfaces in \cite{Mok81},
$\Omega$ is a Stein manifold.
\end{proof}

\begin{thm}
The closure $\overline\Omega$ of $\Omega$ in $B$ has no pseudoconvex neighborhood basis.
\end{thm}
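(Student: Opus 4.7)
The plan is to imitate verbatim the argument of Theorem~\ref{thm:no psc nbd basis elliptic curve} from the elliptic-curve case. That proof rests on exactly three ingredients: $B$ is compact, the disc bundles $\Omega$ and $\Omega^*$ share the real analytic Levi-flat boundary $M$ in $B$, and $\Omega^*$ is Stein. All three are already in place: compactness of $B$ is immediate, the shared boundary is built into the definitions of $\tilde\Omega$ and $\tilde\Omega^*$, and the Stein property of $\Omega^*$ was just established via Mok's solution of Serre's problem.

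The proof I would write runs as follows. Argue by contradiction: suppose $\overline\Omega$ admits a pseudoconvex neighborhood basis in $B$. Then there is a pseudoconvex open neighborhood $D \subset B$ of $\overline\Omega$ that omits some point $p \in \Omega^*$. Consider $U := D \cap \Omega^*$. Since $D$ is pseudoconvex in $B$ and $\Omega^*$ is Stein (hence pseudoconvex) in $B$, $U$ is a pseudoconvex open subset of $\Omega^*$, hence Stein by Docquier--Grauert. On the other hand, $D$ contains a one-sided neighborhood of the compact Levi-flat boundary $M$ inside $\Omega^*$, while $B \setminus D$ is compact in the compact manifold $B$; combining these two facts, $K := \Omega^* \setminus U$ is a nonempty compact subset of $\Omega^*$ (nonempty because $p \in K$).

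This conclusion contradicts the Stein-ness of $\Omega^*$. Indeed, $\Omega^*$ has complex dimension two (being a quotient of $D \times \Delta^*$, with $D$ and $\Delta^*$ both one-dimensional), and in a Stein manifold of dimension $\geq 2$ no open subset with nonempty compact complement can itself be Stein: the classical Hartogs extension theorem forces every holomorphic function on $U = \Omega^* \setminus K$ to extend uniquely across $K$, and then the $\mathcal{O}(U)$-convex hull of a small coordinate sphere around any point $q \in K$ contains the full coordinate ball intersected with $U$ and hence accumulates onto $q$, destroying the holomorphic convexity of $U$.

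The main obstacle, and really the only one, is the careful invocation of Hartogs extension in this two-complex-dimensional Stein setting; everything else transcribes directly from the elliptic-curve proof once the Stein-ness of $\Omega^*$ has been supplied by the preceding lemma. As a byproduct, the argument actually yields the sharper statement that \emph{every} pseudoconvex open neighborhood of $\overline\Omega$ in $B$ must coincide with $B$ itself.
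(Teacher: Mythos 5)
Your proposal is correct and follows essentially the same route as the paper: intersect a putative pseudoconvex neighborhood with the Stein manifold $\Omega^*$, observe the complement in $\Omega^*$ is a nonempty compact set, and derive a contradiction with Steinness. The paper leaves the last step as "this is impossible since $\Omega^*$ is Stein"; your Hartogs-extension justification is exactly the intended filling-in of that step.
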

\begin{proof}
Assume that $W$ is a small pseudoconvex neighborhood of $\overline\Omega$ in $B$.
Then $W\cap\Omega^*$ is a pseudoconvex domain in $\Omega^*$.
By construction, $\Omega^*\backslash W$ is a compact set.
This is impossible since  $\Omega^*$ is Stein.
\end{proof}

\section{Flat bundles over compact manifolds with positive first Betti numbers}
In this section, we will prove that for any compact complex manifold $X$ with positive first Betti number,
there is a flat disc bundle over $X$ such that its total space has a real analytic exhaustive p.s.h function
but does not admit any nonconstant holomorphic function.
The construction is a generalization of that in \S\ref{subsec:disc bundle hopf:hyperconvex}.

Let $X$ be a compact complex manifold with first Betti number $b_1(X)\geq 1$.
Let $\tilde X$ be the universal covering of $X$ with $X=\tilde X/\Gamma'$,
where $\Gamma'\subset Aut(\tilde X)$ is isomorphic to the fundamental group of $X$.

We have $H_1(X,\mathbb Z)\approx \Gamma'/[\Gamma',\Gamma']$,
where $[\Gamma',\Gamma']$ is the normal subgroup of $\Gamma'$ generated by all elements of the form
$aba^{-1}b^{-1}$ with $a,b\in\Gamma'$ (see Theorem 2A.1 in \cite{Hatcher01}).
Since $b_1(X)\geq 1$, there is a surjective group morphism $\sigma':\Gamma'\rightarrow\mathbb Z$.

Let $D=\tilde X/\ker\sigma'$ and let $\Gamma=\Gamma'/\ker\sigma'$.
Then $D$ is a noncompact manifold and  $\Gamma$ is isomorphic to $\mathbb Z$ which acts freely and properly discontinuously on $D$.
The quotient space $D/\Gamma$ is just $X$.

Let $\gamma$ be a generator of $\Gamma$.
Let $\sigma:\Gamma\rightarrow Aut(\mathbb P^1)$ be the group morphism given by $\sigma(\gamma)(v)=\alpha v$,
where $\alpha=e^{\lambda2\pi i}$ for a fixed irrational real number $\lambda$.
Then $\sigma$ induces an action of $\Gamma$ on $\mathbb P^1$ and further induces an action of $\Gamma$ on $\tilde B=D\times\mathbb P^1$.
The quotient $B=\tilde B/\Gamma$ if a flat $\mathbb P^1$-bundle over $X$.

Let $\tilde\Omega=D\times\Delta, \tilde\Omega^*=D\times\Delta^*$.
Let $\Omega=\tilde\Omega/\Gamma$, which is a disc bundle over $X$.
The aim is to prove the following

\begin{thm}
$B$ is a projective manifold if $X$ is projective.
There is a real analytic exhaustive plurisubharmonic function on $\Omega$.
There is no nonconstant holomorphic function on $\Omega$.
\end{thm}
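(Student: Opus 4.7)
The three assertions split naturally. The projectivity of $B$ is nearly immediate from Proposition~\ref{prop:submersion case}: the rotation $\sigma(\gamma)(v)=\alpha v$ fixes both $0$ and $\infty$, so the constant sections $\tilde x\mapsto(\tilde x,0)$ and $\tilde x\mapsto(\tilde x,\infty)$ of $\tilde B\to\tilde X$ are $\Gamma$-equivariant and descend to disjoint holomorphic sections of $p:B\to X$. The associated divisor $X_0\subset B$ yields a line bundle $\mathcal O(X_0)$ whose restriction to each $\mathbb P^1$-fiber is $\mathcal O(1)$, hence fiberwise ample; Proposition~\ref{prop:submersion case} then gives projectivity of $B$ whenever $X$ is projective.

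For the p.s.h.~exhaustion, I would take $\tilde\rho(z,v)=|v|^2$ on $\tilde\Omega=D\times\Delta$. Since $|\alpha|=1$ this function is $\Gamma$-invariant, and it is plainly real analytic and plurisubharmonic, so it descends to a real analytic p.s.h.~function $\rho:\Omega\to[0,1)$. Its sublevel sets $\{\rho<c\}$ with $c<1$ are closed-disc subbundles over the compact base $X$, hence relatively compact in $\Omega$, so $\rho$ is a (bounded) exhaustion.

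The third and substantive part is the absence of nonconstant holomorphic functions. Let $f$ be holomorphic on $\Omega$, and lift it to a $\Gamma'$-invariant holomorphic function $\tilde F$ on $\tilde X\times\Delta$, where $\Gamma'$ acts by $\gamma'\cdot(\tilde x,v)=(\gamma'\tilde x,\alpha^{\sigma'(\gamma')}v)$. Expanding in $v$, write $\tilde F(\tilde x,v)=\sum_{k\geq 0}c_k(\tilde x)v^k$ with $c_k:\tilde X\to\mc$ holomorphic; the $\Gamma'$-invariance of $\tilde F$ translates to the equivariance $c_k(\gamma'\tilde x)=\alpha^{-k\sigma'(\gamma')}c_k(\tilde x)$. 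Since $|\alpha|=1$, $|c_k|^2$ is $\Gamma'$-invariant and descends to a p.s.h.~function on the compact complex manifold $X$; the maximum principle for plurisubharmonic functions on a compact manifold forces it to be constant, so $|c_k|$ is constant on the connected manifold $\tilde X$, and the constant-modulus/open-mapping principle then gives that $c_k$ itself is constant. The equivariance now says this constant either vanishes or satisfies $\alpha^{-k\sigma'(\gamma')}=1$ for every $\gamma'$; picking $\gamma'$ with $\sigma'(\gamma')=1$ (which exists by surjectivity of $\sigma'$) yields $\alpha^k=1$, which by the irrationality of $\lambda$ happens only at $k=0$. Thus $c_k\equiv 0$ for $k\geq 1$ and $c_0$ is constant, so $\tilde F$ and therefore $f$ is constant.

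The main obstacle is this last step. In the Hopf-surface case of \S\ref{subsec:disc bundle hopf:hyperconvex} one could extend $\tilde F$ across the deleted origin by Hartogs and then expand at a single point; here $\tilde X$ is an arbitrary noncompact complex manifold and no such ambient extension is available. The replacement is the observation that each $c_k$ is a holomorphic section of a flat unitary line bundle on $X$, whose modulus squared is a p.s.h.~function on a compact manifold; the ensuing maximum-principle/constant-modulus chain is exactly what makes the irrationality of $\lambda$ genuinely force $c_k=0$ for $k\geq 1$.
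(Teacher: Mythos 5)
Your proposal is correct, and its first two parts coincide with the paper's: projectivity comes from the divisor of a fixed-point section of $p:B\rightarrow X$ (the paper invokes the same argument as Lemma \ref{lem:projectivity no vector bd}, with the fixed points $0,\infty$ of the rotation playing the role of $\pm1$), and the exhaustion is $|v|^2$. For the absence of nonconstant holomorphic functions you take a genuinely different route. The paper works directly with the lift $\tilde f$ on $D\times\Delta$, $D=\tilde X/\ker\sigma'$: it chooses a compact $F\subset D$ with $\bigcup_n\gamma^nF=D$, locates a maximum of $|\tilde f|$ over the compact set $F\times S_r$, uses $\Gamma$-invariance to promote this to a global maximum of $|\tilde f|$ on each leaf $D\times\{\alpha^nv_0\}$, applies the maximum principle for holomorphic functions leafwise, and then concludes by density of the irrational rotation orbit $\{\alpha^nv_0\}$ in $S_r$ together with the identity theorem. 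You instead expand in the fiber variable, $\tilde F=\sum_k c_k(\tilde x)v^k$, note that each $c_k$ transforms by the unitary character $\gamma'\mapsto\alpha^{-k\sigma'(\gamma')}$, so $|c_k|^2$ descends to a plurisubharmonic function on the compact connected manifold $X$ and is therefore constant; the open mapping theorem makes $c_k$ itself constant, and the character relation plus $\alpha^k\neq1$ for $k\geq1$ forces $c_k\equiv0$. The same two ingredients (compactness of the base via a maximum principle, irrationality of $\lambda$) appear in both, but irrationality enters for the paper through orbit density on circles and for you through the nonvanishing of $\alpha^k-1$. Your version is more structural --- it exhibits the vanishing as the absence of nonzero flat sections of a nontrivial unitary character --- and it adapts cleanly to the $B^n$-bundle generalization stated at the end of the paper, where one expands in multi-indices and uses the characters $\alpha_1^{-k_1}\cdots\alpha_n^{-k_n}$; the paper's version avoids the series expansion but needs the density statement for the torus action in that generality.
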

\begin{proof}
The first statement can be proved by the same argument as in the proof of Lemma \ref{lem:projectivity no vector bd}.

For the second statement, let $\tilde\rho(z,v)=|v|^2$.
Then $\tilde\rho$ is a real analytic function on $\tilde\Omega=D\times\Delta$ which is invariant under the action of $\Gamma$,
and hence induces a plurisubharmonic function say $\rho$ on $\Omega$,
which is obviously exhaustive.

For the last statement,
since $D/\Gamma$ is compact, there is a compact set $F\subset D$ such that $\cup_{n\in\mathbb Z}\gamma^nF=D$.
Let $D_v=D\times\{v\}\subset \tilde\Omega$ for $v\in\Delta$ and  $S_r=\{v'\in\Delta; |v'|=r\}$ for $r\in (0,1)$.
Assume $f$ is a holomorphic function on $\Omega=\tilde\Omega/\Gamma$.
Let $\tilde f$ be the pull back of $f$ to $\tilde \Omega$.
Then $\tilde f$ is invariant under the action of $\Gamma$,
that is, $\tilde f(\gamma^n z, \alpha^nv)=\tilde f(z,v), (z,v)\in D\times \Delta$ for all $n\in \mathbb Z$.
It suffices to prove that $\tilde f$ is constant.

Let $(z_0,v_0)\in F\times S_r$ such that $\tilde f(z_0,v_0)=\sup_{(z,v)\in F\times S_r}|\tilde f(z,v)|$.
For any $(z,v_0)\in D_{v_0}$, there exists $n$ such that $(\gamma^nz,\alpha^nv_0)\subset F\times S_r$.
Note that $\tilde f$ is invariant under the action of $\Gamma$,
$\tilde f|_{D_{v_0}}$ attains its maximum at $(z_0,v_0)$.
By the maximum principle for holomorphic functions, $\tilde f|_{D_{v_0}}$ is constant and identically equals to $\tilde f(z_0,v_0)$.

We go further to prove that $\tilde f$ is constant on $D\times S_r$.
By assumption, we also have $\tilde f(z_0,v_0)=\sup_{(z,v)\in D\times S_r}|\tilde f(z,v)|$.
For any $n\in\mathbb Z$, we have $\tilde f(\gamma^n z_0, \alpha^n v_0)=\tilde f(z_0,v_0)$.
So $\tilde f|_{D_{\alpha^n v_0}}$ attains its maximum $\tilde f(z_0,v_0)$ at $(\gamma^n z_0, \alpha^n v_0)$.
Again by the maximum principle, $\tilde f|_{D_{\alpha^n v_0}}\equiv \tilde f(z_0, v_0)$.
So $\tilde f$ takes constant value on $\cup_{n\in\mathbb Z}D_{\alpha^n v_0}$.
Since $\lambda$ is irrational, $\{\alpha^nv_0;n\in\mathbb Z\}$ is dense in $S_r$.
By continuity, $\tilde f$ is constant on $D\times S_r$.
By the identity theorem for holomorphic functions, $\tilde f$ is constant on $D\times\Delta$.
\end{proof}

In the above construction, we can replace $\Delta$ by the unit ball $B^n$ of dimension $n\geq 1$,
and replace the action of $S^1$ on $\Delta$ by the natural action of the $n$-dimensional torus $T^n=S^1\times\cdots\times S^1$ on $B^n$.
Note that there exist $\alpha_1, \cdots, \alpha_n\in S^1$ such that $\{(\alpha^k_1,\cdots,\alpha^k_n)|k\in\mathbb Z\}$ is dense in $T^n$.
By the same argument, we can prove the following theorem which answers affirmatively a problem proposed by professor Xiangyu Zhou (private communication).

\begin{thm}
For any compact complex manifold $X$ with positive first Betti number and any $n\geq 1$,
there is a flat $B^n$-bundle $\Omega$ over $X$ such that:
\begin{itemize}
\item[(1)] there is a real analytic bounded exhaustive plurisubharmonic function $\rho$ on $\Omega$ such that $\partial\bar\partial \rho$
has $n$-positive eigenvalues;
\item[(2)] $\Omega$ admits no nonconstant holomorphic function.
\end{itemize}
\end{thm}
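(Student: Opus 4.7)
The plan is to mirror the preceding construction, with $\Delta$ replaced by the unit ball $B^n\subset\mc^n$ and the single rotation replaced by a diagonal $T^n$-action. I would keep $D$, $\Gamma=\langle\gamma\rangle\cong\mathbb Z$ and $X=D/\Gamma$ exactly as in the previous subsection, and pick $\alpha_1,\ldots,\alpha_n\in S^1$ whose diagonal $\mathbb Z$-orbit $\{(\alpha_1^k,\ldots,\alpha_n^k):k\in\mathbb Z\}$ is dense in $T^n$, as noted in the excerpt. I would then define $\sigma:\Gamma\to Aut(B^n)$ by $\sigma(\gamma)(v)=(\alpha_1v_1,\ldots,\alpha_nv_n)$ and take $\Omega=(D\times B^n)/\Gamma$ with the diagonal action; this is the desired flat $B^n$-bundle over $X$.

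For (1), I would take $\tilde\rho(z,v)=\sum_{j=1}^n|v_j|^2$, which is bounded by $1$, real analytic, $\Gamma$-invariant (since $|\alpha_j|=1$), exhausts $\tilde\Omega$, and whose complex Hessian in the fibre directions is the identity. It therefore descends to a $\rho$ on $\Omega$ with $n$ positive Hessian eigenvalues everywhere.

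For (2), I would extend the maximum-principle argument of the preceding proof as follows. Pull any holomorphic $f$ on $\Omega$ back to a $\Gamma$-invariant $\tilde f$ on $D\times B^n$. Fix $r=(r_1,\ldots,r_n)$ with $r_j>0$ and $\sum r_j^2<1$; the Reinhardt torus $T_r=\{v:|v_j|=r_j\}\subset B^n$ is $\sigma(\Gamma)$-invariant. Choose a compact fundamental domain $F\subset D$ (which exists because $X$ is compact) and a maximizer $(z_0,v_0)\in F\times T_r$ of $|\tilde f|$. Exactly as in the preceding proof, $\Gamma$-invariance of $\tilde f$ combined with $\bigcup_k\gamma^kF=D$ forces $|\tilde f|$ on the fibre $D\times\{v_0\}$ to attain its maximum at $(z_0,v_0)$, so by the maximum principle $\tilde f$ is constant on $D\times\{v_0\}$; density of the $\sigma(\Gamma)$-orbit of $v_0$ in $T_r$ then propagates this constancy to all of $D\times T_r$. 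For each fixed $z$, the holomorphic function $v\mapsto\tilde f(z,v)$ on $B^n$ is therefore constant on the Reinhardt torus $T_r$, which by the Taylor expansion at the origin (the Fourier coefficients $a_\alpha r^\alpha$ must vanish for $\alpha\neq 0$) forces it to be constant on all of $B^n$. The resulting $\Gamma$-invariant holomorphic function on $D$ descends to the compact $X$, and so must be constant.

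The main obstacle, relative to the $n=1$ case, is the step from ``$\tilde f$ constant on the Reinhardt torus $T_r$'' to ``$\tilde f$ independent of $v$'': for $n=1$ this is the identity theorem applied to the disc, while for $n>1$ one uses the Fourier expansion on $T_r$ as indicated. The remaining ingredients — the compact fundamental domain argument and the density of the diagonal $\mathbb Z$-orbit in $T^n$ — are direct transcriptions of the $n=1$ proof.
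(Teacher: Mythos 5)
Your proposal is correct and follows essentially the same route as the paper, which itself only sketches this theorem as ``the same argument'' with $\Delta$ replaced by $B^n$ and the rotation by a diagonal $T^n$-action with dense $\mathbb Z$-orbit. The one step you spell out beyond the paper --- passing from ``constant on the Reinhardt torus $T_r$'' to ``constant on $B^n$'' via vanishing of the Fourier coefficients $a_\alpha r^\alpha$ for $\alpha\neq 0$ --- is exactly the right replacement for the one-variable identity-theorem step, so the argument is complete.
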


\bibliographystyle{amsplain}

\end{document}